\documentclass{daj}
\dajAUTHORdetails{%
  title = {A Sharp Threshold for van der Waerden's Theorem in Random Subsets},
  author = {Ehud Friedgut, Hiep Han, Yury Person, and Mathias Schacht},
  plaintextauthor = {Ehud Friedgut, H\d{\^e}p H\`an, Yury Person, Mathias Schacht},
  runningtitle = {Sharp Threshold for van der Waerden's Theorem in Random Subsets}, 
  keywords = {sharp thresholds, van der Waerden theorem},
}

\dajEDITORdetails{%
   year={2016},
   number={7},
   received={21 December 2015},   
   published={29 February 2016},  
   doi={10.19086/da.615},       
}   
     
\usepackage{amsmath,amssymb,amsthm} 
   
\usepackage{mathabx}\changenotsign    
\usepackage{mathrsfs} 
\usepackage{dsfont}
\usepackage{microtype}

\usepackage[open,openlevel=2,atend]{bookmark}

\usepackage[abbrev,msc-links,backrefs]{amsrefs} 
\usepackage{doi}

\renewcommand{\PrintDOI}[1]{\doi{#1}}

\usepackage[T1]{fontenc}
\usepackage{lmodern}

\usepackage[english]{babel}

\usepackage{enumitem}

\def\rmlabel{\upshape({\itshape \roman*\,})}

\def\alabel{\upshape({\itshape \alph*\,})}

\def\nlabel{\upshape({\itshape \arabic*\,})} 

\let\polishlcross=\l
\def\l{\ifmmode\ell\else\polishlcross\fi}

\def\tand{\ \text{and}\ }
\def\qand{\quad\text{and}\quad}
\def\qqand{\qquad\text{and}\qquad}

\let\emptyset=\varnothing
\let\setminus=\smallsetminus

\makeatletter
\def\moverlay{\mathpalette\mov@rlay}
\def\mov@rlay#1#2{\leavevmode\vtop{   \baselineskip\z@skip \lineskiplimit-\maxdimen
   \ialign{\hfil$\m@th#1##$\hfil\cr#2\crcr}}}
\newcommand{\charfusion}[3][\mathord]{
    #1{\ifx#1\mathop\vphantom{#2}\fi
        \mathpalette\mov@rlay{#2\cr#3}
      }
    \ifx#1\mathop\expandafter\displaylimits\fi}
\makeatother

\makeatletter
\newcommand{\subalign}[1]{  \vcenter{    \Let@ \restore@math@cr \default@tag
    \baselineskip\fontdimen10 \scriptfont\tw@
    \advance\baselineskip\fontdimen12 \scriptfont\tw@
    \lineskip\thr@@\fontdimen8 \scriptfont\thr@@
    \lineskiplimit\lineskip
    \ialign{\hfil$\m@th\scriptstyle##$&$\m@th\scriptstyle{}##$\crcr
      #1\crcr
    }  }
}
\makeatother  
  
\newtheorem{theorem}                   {Theorem} 
\newtheorem{lemma}           [theorem] {Lemma}

\newtheorem{cor}             [theorem] {Corollary}

\newtheorem{fact}            [theorem] {Fact}

\newtheorem{observation}      [theorem] {Observation}

\theoremstyle{remark}

\newcommand{\Z}{\mathds{Z}}
\newcommand{\Nat}{\mathds{N}}

\newcommand{\ex}[1]{\mathds{E}\left(#1\right)}
\newcommand{\pr}[1]{\mathds{P}\left(#1\right)}

\newcommand{\eps}{\varepsilon}
\newcommand{\cA}{\mathcal{A}}
\newcommand{\cC}{\mathcal{C}}

\newcommand{\cF}{\mathcal{F}}

\newcommand{\cT}{\mathcal{T}}
\newcommand{\cZ}{\mathcal{Z}}
\newcommand{\cB}{\mathcal{B}}

\newcommand{\rrr}{\rightarrow(k\text{-AP})_2}

\newcommand{\Zn}{\Z/n\Z}
\newcommand{\Znp}{\Z_{n,p}}
\newcommand{\Znep}{\Z_{n,\eps p}}

\renewcommand{\l}{\ell}
\newcommand{\cI}{\mathcal{I}}
\newcommand{\cS}{\mathcal{S}}
\newcommand{\ocF}{\overline{\cF}}
\newcommand{\cP}{\mathcal{P}}

\newcommand{\im}{\mathrm{Im}}

\newcommand{\EE}{\mathds{E}}
\renewcommand{\Pr}{\mathds{P}}
\newcommand{\codeg}{\mathrm{codeg}}

\def\phi{\varphi}

\newcommand{\NN}{\mathds{N}}
\newcommand{\RR}{\mathds{R}}

\begin{document}

\begin{frontmatter}[classification=text]

\title{A Sharp Threshold for van der Waerden's Theorem in Random Subsets\footnote{The 
	cooperation of the authors was supported by the \emph{German-Israeli Foundation 
	for Scientific Research and Development}.}}

\author[EF]{Ehud Friedgut\thanks{Supported by ISF grant 0398246.}}
\author[Hiep]{Hi{\d{\^e}}p H{\`a}n\thanks{Supported
	 by the Millennium Nucleus Information and Coordination in Networks ICM/FIC RC130003, 
	 by FONDECYT Iniciaci\'on grant 11150913, and by FAPESP (Proc. 2103/03447-6).}}
\author[YP]{Yury Person}
\author[MS]{Mathias Schacht\thanks{Supported through the \emph{Heisenberg-Programme} of the DFG.}}

\begin{abstract}
We establish sharpness for the threshold of van der Waerden's theorem in random subsets of $\Zn$.
More precisely, for $k\geq 3$ and $Z\subseteq \Zn$
we say~$Z$ has the van der Waerden property if any two-colouring of~$Z$ 
yields a monochromatic arithmetic progression of length~$k$. 
R\"odl and Ruci\'nski (1995) determined the threshold for this property for 
any~$k$ and we show that this threshold is sharp.

The proof is based on Friedgut's criterion~(1999) for sharp thresholds and on the recently developed 
\emph{container method} for independent sets in hypergraphs by 
Balogh, Morris and Samotij (2015) and by
Saxton and Thomason (2015).
\end{abstract}
\end{frontmatter}


\section{Introduction}
One of the main research directions in extremal and probabilistic combinatorics over the last two decades has been the extension of classical 
results for discrete structures to the sparse random setting. 
Prime examples include Ramsey's theorem for graphs and hypergraphs~\cites{RR95,FRS10,CG11}, 
Tur\'an's theorem in extremal graph theory, and Szemer\'edi's theorem on arithmetic progressions~\cites{mathias,CG11} 
(see also~\cites{BMS12,ST12,CGSS14}). Results of that form establish the \emph{threshold} for the 
classical result in the random setting. For a property the threshold is given by a function~$\hat p=\hat p(n)$ 
such that for every $p_0\ll \hat p$ the random graph $G(n,p_0)$ (or a random binomial subset of~$[n]=\{1,2,\ldots, n\}$)
with parameter~$p_0$, the probability the property
holds is asymptotically zero,
whereas if $p_0$ is replaced by 
some $p_1\gg \hat p$ the property does hold asymptotically almost surely (a.a.s.), i.e.,
for a property~$\cP$ of graphs and probabilities~$p=p(n)$ we have 
\[
	\lim_{n\to\infty}\Pr\big(G(n,p)\in\cP\big)
	=
	\begin{cases}
	0, &\text{if}\ p\ll\hat p\\
	1,&\text{if}\ p\gg\hat p\,.
	\end{cases}
\]
The two statements involving $p_0$ and $p_1$
are referred to as the \emph{0-statement} and the \emph{1-statement}. For the properties mentioned above it can be shown that the optimal parameters~$p_0$ and $p_1$ for which
the $0$-statement and the $1$-statement hold, only differ by a
multiplicative constant. 

The threshold for van der Waerden's theorem~\cite{vdW} is such an example and was obtained by 
R\"odl and Ruci\'nski in~\cites{RR95,RR97}. We denote by $[n]_p$ the binomial random subset of $[n]$, where every element of 
$[n]$ is included independently with probability $p=p(n)$. Furthermore, for a subset~$A\subseteq [n]$ 
we write  $A\rightarrow(k\text{-AP})_r$ to denote 
the fact that no matter how one colours the elements of~$A$ with $r$ colours there is 
always a monochromatic arithmetic progression with $k$ elements in~$A$.

\begin{theorem}[R\"odl \& Ruci\'nski]
\label{thm:randomvdW}
For every $k\geq 3$ and $r\geq 2$ there exist constants $c_0$, $c_1>0$ such that 
\[
\lim_{n\to\infty}\Pr([n]_p\rightarrow (k\text{-AP})_r)=
\begin{cases}
0, &\text{ if }p\le c_0n^{-\frac{1}{k-1}},\\
1, &\text{ if }p\ge c_1n^{-\frac{1}{k-1}}.
\end{cases}
\]
\end{theorem}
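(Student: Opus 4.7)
The plan is to prove Theorem~\ref{thm:randomvdW} by establishing the $0$- and $1$-statements separately at the threshold $\hat p = n^{-1/(k-1)}$. At this threshold the expected number of $k$-APs contained in $[n]_p$ is of order $n^2p^k = \Theta(n^{(k-2)/(k-1)})$, which tends to infinity, so $[n]_p$ typically contains many $k$-APs; the $0$-statement thus cannot rely on the absence of $k$-APs but must exploit sparsity properties of the hypergraph $\cH_p$ on $[n]_p$ whose edges are those $k$-APs.

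For the $0$-statement, the plan is a Lov\'asz Local Lemma argument on a random $r$-colouring, preceded by small deletions. Two $k$-APs sharing $j\geq 2$ elements are determined by the two shared elements and a bounded (in $k$) number of choices of their positions, so the expected number of such overlapping pairs in $[n]_p$ is $O(n^2 p^{2k-2})=O(1)$; by Markov's inequality, a.a.s.\ only $o(\log n)$ such pairs occur and one vertex can be removed from each to obtain a set $S\subseteq [n]_p$ with $|S|=o(|[n]_p|)$ whose deletion leaves a pairwise $1$-intersecting collection of $k$-APs. Since the number of $k$-APs through a fixed vertex of $[n]_p$ has expectation of order $np^{k-1}=O(c_0^{k-1})$, a concentration and deletion step further ensures that every surviving vertex lies in at most $D=D(c_0,r)$ of the remaining APs. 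For $c_0$ sufficiently small, a uniformly random $r$-colouring of the surviving vertices together with the symmetric Local Lemma applied to the intersection graph of the remaining APs produces, with positive probability, a colouring of $[n]_p\setminus S$ with no monochromatic $k$-AP. The vertices of $S$ are then coloured greedily at the end, using that each of them lies in a bounded number of $k$-APs of $\cH_p$ and hence has an admissible colour once $c_0$ is small compared with $r$.

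For the $1$-statement, the plan is to invoke a sparse random Szemer\'edi theorem: for every $\delta>0$ and $k\geq 3$ there is $C=C(\delta,k)$ such that for $p\geq Cn^{-1/(k-1)}$, a.a.s.\ every subset $A\subseteq [n]_p$ with $|A|\geq \delta|[n]_p|$ contains an arithmetic progression of length~$k$. Given this, set $c_1=C(1/r,k)$; then in any $r$-colouring of $[n]_p$ the largest colour class has size at least $|[n]_p|/r$ and therefore contains a monochromatic $k$-AP. The sparse Szemer\'edi theorem itself admits several proofs, including Schacht's transference method and, as indicated in the abstract, the hypergraph container method applied to the $k$-AP-free hypergraph on $[n]$.

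I expect the main obstacle to be the $0$-statement, in particular the interplay between deletion and the local lemma hypothesis: one needs simultaneous control of overlapping pairs, of the maximum codegree, and of the recolouring of the deleted vertices, all while driving the failure probability to~$0$. The $1$-statement, by contrast, reduces cleanly to the sparse random Szemer\'edi theorem, whose proof is by now standard.
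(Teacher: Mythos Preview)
The paper does not prove Theorem~\ref{thm:randomvdW}; it is quoted as a result of R\"odl and Ruci\'nski~\cites{RR95,RR97}. The only place where the paper touches the underlying argument is the sketch of Lemma~\ref{lem:Vojta}, which recalls the Deterministic and Probabilistic Lemmas from~\cite{RR97}: if $Z\rightarrow(k\text{-AP})_2$ then the $k$-AP hypergraph $H_{Z,k}$ must contain a configuration of type~$\cT_1$ or~$\cT_2$, and a first-moment computation shows that such configurations a.a.s.\ do not occur in $H_{\Znp,k}$ for $p\le c_0n^{-1/(k-1)}$ with $c_0$ small. That structural route is how the $0$-statement is actually proved, and it is quite different from what you propose.

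Your $1$-statement plan is correct as stated: the sparse random Szemer\'edi theorem (Theorem~\ref{thm:mathias} in the paper) immediately gives it with $c_1=C(1/r,k)$. It is worth noting that this is historically backwards---R\"odl and Ruci\'nski proved the $1$-statement long before the sparse Szemer\'edi theorem was available, via a direct multi-round argument---but as a proof today it is valid.

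Your $0$-statement plan, however, has a genuine gap. The step ``a concentration and deletion step further ensures that every surviving vertex lies in at most $D=D(c_0,r)$ of the remaining APs'' does not work. The degree of a fixed vertex in the $k$-AP hypergraph on $[n]_p$ is approximately Poisson with mean $\Theta(c_0^{k-1})$, so it does \emph{not} concentrate; the maximum degree over the $\Theta(pn)$ vertices is of order $\log n/\log\log n$ regardless of how small $c_0$ is, and the set of vertices with degree exceeding any fixed $D$ has size $\Theta(pn)$, not $o(pn)$. Consequently you cannot reach a bounded-degree dependency graph by deleting $o(pn)$ vertices, and the symmetric Local Lemma hypothesis fails. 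The final greedy step is equally problematic: a deleted vertex~$v$ may lie in many $k$-APs whose other $k-1$ vertices are already coloured, and for $r=2$ even two such APs (one almost red, one almost blue) can leave $v$ with no admissible colour. This is precisely why the R\"odl--Ruci\'nski $0$-statement is not a Local Lemma argument but the structural one sketched above: one shows that without a $\cT_1$- or $\cT_2$-type subhypergraph the $k$-AP hypergraph is a forest of loose paths and loose cycles, which can be $2$-coloured directly.
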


For the corresponding result in $\Zn$ and for two colours
we close the gap between~$c_0$ and~$c_1$. More precisely,  
we show that there exist bounded sequences $c_0(n)$ and $c_1(n)$ with ratio 
tending to 1 as $n$ tends to infinity such that the statement holds
(see Theorem~\ref{thm:main} below). 
In other words, we establish a {\em sharp threshold} for 
van der Waerden's theorem for two colours in $\Zn$.

Similarly to the situation for subsets of $[n]$
we write $A\rightarrow(k\text{-AP})_r$ for subsets $A\subseteq \Z/n\Z$ 
if any $r$-colouring of $A$ yields a monochromatic  arithmetic progression with $k$-elements in~$\Zn$
and we write $A\nrightarrow(k\text{-AP})_r$ if $A$ fails to have this property.
Moreover, we denote by $\Znp$
the binomial random subset of $\Zn$ with parameter $p$. With this notation at hand we can state our main result.

\begin{theorem}\label{thm:main}
For all $k\geq 3$ there exist constants $c_1>c_0>0$ and a function $c(n)$ satisfying $c_0\leq c(n)\leq c_1$ 
such that for every $\eps>0$ we have
\[
\lim_{n\to\infty}\Pr(\Znp\rightarrow (k\text{-AP})_2)=
\begin{cases}
0, &\text{ if }p\le (1-\eps)c(n)n^{-\frac{1}{k-1}},\\
1, &\text{ if }p\ge (1+\eps)c(n)n^{-\frac{1}{k-1}}.
\end{cases}
\]
\end{theorem}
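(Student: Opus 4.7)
The plan is to apply Friedgut's sharp threshold criterion for monotone properties that are invariant under a transitive group action. Set $\cP = \{A \subseteq \Zn : A \rightarrow (k\text{-AP})_2\}$; this property is monotone and invariant under the translation action of $\Zn$ on itself, so Friedgut's symmetric criterion applies. Assuming for contradiction that no sharp threshold exists at $\hat p = c(n)n^{-1/(k-1)}$, the criterion yields, along a subsequence of $n$, constants $\eps, \delta > 0$ and a uniformly bounded booster $B_n \subseteq \Zn$ such that for $p = (1-\eps)\hat p$ one has $\Pr(\Znp \in \cP) \in [\delta, 1-\delta]$ and
\[
	\Pr\bigl(\Znp \cup (B_n + t) \in \cP\bigr) - \Pr\bigl(\Znp \in \cP\bigr) \geq \delta,
\]
where $t$ is uniform in $\Zn$ independent of $\Znp$. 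The task is to refute this boost.

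The next ingredient is the container method of Balogh-Morris-Samotij and Saxton-Thomason, applied to the $k$-uniform hypergraph on $\Zn$ whose edges are the $k$-APs. Combined with Szemer\'edi's theorem (giving an $o(n)$ bound on the density of $k$-AP-free sets) and a Varnavides-type supersaturation statement, the container theorem yields a family $\cF_n$ of at most $2^{o(n^{(k-2)/(k-1)})}$ containers, each of size $o(n)$, covering every $k$-AP-free subset of $\Zn$. Every 2-colouring of $\Znp$ without monochromatic $k$-APs is then witnessed by a pair $(\cC_R, \cC_B) \in \cF_n \times \cF_n$ with the red and blue classes contained in $\cC_R$ and $\cC_B$ respectively. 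Since $|\Znp| = \Theta(n^{(k-2)/(k-1)})$ a.a.s., the number of witnessing pairs is subexponentially smaller than $2^{|\Znp|}$.

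The heart of the argument is then to show that, conditional on any fixed witness pair $(\cC_R, \cC_B)$, a random shift $B_n + t$ admits a colouring extending the one on $\Znp$ with probability $1 - o(1)$. Call $t$ \emph{bad} if every extension of the partial colouring to the $\approx |B_n|$ new points of $B_n + t$ produces a monochromatic $k$-AP inside $\cC_R \cup \cC_B$. Translation invariance spreads $B_n + t$ uniformly over $\Zn$, while the $o(n)$ density of each container means that a random $t$ places the booster onto ``forced'' $k$-AP configurations only on a $o(1)$-fraction of shifts. A union bound over the $2^{o(n^{(k-2)/(k-1)})}$ container pairs, weighted by their probability of being a witness, then shows that
\[
	\Pr\bigl(\Znp \cup (B_n + t) \in \cP\bigr) - \Pr\bigl(\Znp \in \cP\bigr) = o(1),
\]
contradicting the $\delta$-boost from Friedgut's criterion.

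I expect the main obstacle to be the third step: uniformly controlling the density of bad translates across \emph{all} relevant container pairs, including atypical ones where the colouring does not obviously extend to a near-extremal global pattern. This will require both the quantitative strength of the container theorem (so the number of pairs stays below $2^{o(|\Znp|)}$ and can absorb the union bound) and a secondary deletion/absorption step, driven by supersaturation, to handle pairs where a naive extension fails -- essentially arguing that any container pair contributing non-trivially to the witness count must tile $\Zn$ in a way that leaves most translates of $B_n$ safely colourable.
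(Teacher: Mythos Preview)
Your proposal shares the paper's two main ingredients---Friedgut's criterion and the container method---but deploys them quite differently, and the crucial third step has a real gap.

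The paper does not try to refute the boost directly. Instead it \emph{uses} the boost constructively: for the $\Omega(n)$ translates $x$ with $(Z,B+x)$ interacting, every $k$-AP-free colouring $\phi$ of $Z$ must contain an ``activated'' vertex in the set $M(Z,B+x)$ of elements of $Z$ that focus on $B+x$, so the activated set $A_\phi$ is a hitting set of an auxiliary hypergraph $H=H(Z,B,X)$ on vertex set $Z$. The container theorem is then applied to \emph{this} hypergraph---not to the $k$-AP hypergraph on $\Zn$---producing $2^{o(pn)}$ cores such that all colourings sharing a core agree on it. Each core has a large monochromatic half whose focus $F(\cdot)$ is linear in $n$, and a second random sprinkle $\Z_{n,\eps p}$ a.a.s.\ hits a $k$-AP inside every such focus (Janson), contradicting the second conclusion~\ref{item:BF2} of Corollary~\ref{thm:BF}. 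So the paper plays~\ref{item:BF1} against~\ref{item:BF2}; you try to kill~\ref{item:BF1} outright.

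The specific problem with your step~3 is the claim that the $o(n)$ size of the containers forces the set of bad translates to have density $o(1)$. The obstruction to extending a fixed colouring $\phi$ at a new point $b\in B+t$ is that $b$ lies in $F(\phi^{-1}(\text{red}))\cap F(\phi^{-1}(\text{blue}))$, where $F(S)$ is the set of elements completing some $(k-1)$-AP in $S$ to a $k$-AP. But Lemma~\ref{lem:corefocus} shows that for any $S\subseteq\Znp$ of size $\Omega(pn)$ one has $|F(S)|\geq\delta n$: both focus sets are typically of \emph{linear} size, and nothing you have set up bounds their intersection below $\Theta(n)$. The $o(n)$ container bound is irrelevant here, since the colour classes already sit inside $\Znp$, which itself has size $O(pn)=o(n)$. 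Hence a constant fraction of translates may well be bad for every individual colouring, your union bound does not close, and the ``secondary deletion/absorption step'' you flag would have to carry essentially the entire argument---at which point you need a genuinely new idea, such as the paper's auxiliary hypergraph $H(Z,B,X)$.
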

We have to insist on the setting of $\Z/n\Z$ (instead of $[n]$) since the 
symmetry will play a small but crucial r\^ole in our proof. Another shortcoming is the 
restriction to two colours~$r=2$ and we believe it would be very interesting to extend the result 
to arbitrary~$r$. We remark that only a few sharp thresholds for Ramsey properties 
are known (see, e.g.,~\cites{FK00,FrRoRuTe06}) so far.

Among other tools our proof relies heavily on the criterion for sharp thresholds of 
Friedgut and its extension due to Bourgain~\cite{Fri99}. 
Another crucial tool is the recent \emph{container theorem} for independent sets in hypergraphs 
due to Balogh, Morris and Samotij~\cite{BMS12} and Thomason and Saxton~\cite{ST12}.

Our proof extends to other Ramsey properties for two colours, as long as the corresponding extremal 
problem is \emph{degenerate}, i.e., positive density yields many copies of the target structure 
and the target structure is strictly balanced with respect to its so-called \emph{$2$-density}.
For example, even cycles in graphs, complete $k$-partite, $k$-uniform hypergraphs, and strictly balanced, density regular Rado systems (see~\cite{RR97})
satisfy these assumptions. Moreover, Schacht and Schulenburg~\cite{SchSch} noted that 
the approach undertaken here can be refined to give a shorter proof for the sharp threshold 
of the Ramsey property for triangles and two colours from~\cite{FrRoRuTe06} 
and, more generally, for arbitrary odd cycles.

\section{Locality of coarse thresholds}
In \cite{Fri99} Friedgut gave a necessary condition for a graph property to have a coarse threshold, namely, that it is approximable by a ``local'' property. In the appendix to this work 
Bourgain proved a similar result for more general discrete structures. Here we 
state the special case applicable for properties in $\Zn$.
\begin{theorem}[Bourgain]\label{thm:Bourgain}
There exist functions $\delta(C,\tau)$ and $K(C,\tau)$ such that the following holds. Let $p=o(1)$ 
as  $n$ tends to infinity, let $\cA$ be a monotone family of subsets of $\Zn$, with
\[
\tau<\mu(p,\cA):=\Pr(\Znp\in\cA)<1-\tau\,,
\] and assume also $p\cdot \frac{d\mu(p,\cA)}{dp}\le C$.
 Then there exists some $B\subseteq \Zn$ with $|B|\le K$ such that 
\begin{equation}\label{eq:bconc}
   \Pr(\Znp\in\cA\,|\, B\subseteq \Znp)>\Pr(\Znp\in\cA)+\delta\,.
\end{equation} 
\end{theorem}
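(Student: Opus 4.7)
My plan is to prove Theorem~\ref{thm:Bourgain} by contradiction, following the strategy of Friedgut's original argument together with Bourgain's refinement. Suppose that no $B\subseteq \Z/n\Z$ with $|B|\le K$ satisfies~\eqref{eq:bconc}. The goal is to derive, from the hypothesis $p\cdot d\mu(p,\cA)/dp\le C$, that $\mu(p,\cA)$ must lie close to $0$ or $1$, contradicting $\tau<\mu(p,\cA)<1-\tau$.

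The first step is the Margulis--Russo identity, which rewrites the derivative bound as a bound on total influence. In the $p$-biased cube $\{0,1\}^{\Z/n\Z}$ one has $p(1-p)\,d\mu(p,\cA)/dp=\sum_{x}I^{(p)}_x(\cA)$, where $I^{(p)}_x(\cA)$ is the $p$-biased influence of the coordinate $x$, that is, the probability that toggling the membership of $x$ in $\Znp$ changes membership in $\cA$. The assumption of the theorem then yields $\sum_{x}I^{(p)}_x(\cA)\le 2C$ for $n$ sufficiently large.

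Next I would perform harmonic analysis in the $p$-biased cube: decompose $f=\mathds{1}_\cA$ into its orthogonal Fourier--Walsh expansion with respect to the $p$-biased product measure on $\{0,1\}^{\Z/n\Z}$. The contradiction hypothesis -- that no small $B$ yields a significant conditional boost -- can be reinterpreted as the assertion that the low-degree Fourier mass of $f-\mu(p,\cA)$ carried by sets of size at most~$K$ is negligible. Simultaneously, the high-degree Fourier mass is controlled by the total-influence bound via a hypercontractive (Bonami--Beckner-type) inequality, which transfers $L^2$-mass of high-degree functions into $L^q$ for $q>2$ at a quantitative cost. Combining the two bounds yields $\|f-\mu(p,\cA)\|_2=o(1)$, forcing $\mu(p,\cA)(1-\mu(p,\cA))=o(1)$ and contradicting $\tau<\mu(p,\cA)<1-\tau$.

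The principal obstacle is this final step: the hypercontractive inequality in the sparse regime $p=o(1)$ is substantially weaker than in the constant-bias setting, and the classical Kahn--Kalai--Linial machinery does not transfer directly. Bourgain's refinement in the appendix to~\cite{Fri99} overcomes this with a localisation argument that works for arbitrary product measures, not only the Boolean cube; the delicate point is to extract, from any persistent high-degree Fourier anomaly, an \emph{explicit} small witness $B$ realising~\eqref{eq:bconc} rather than merely a non-constructive existence statement. A secondary difficulty is that, in contrast with Friedgut's graph-theoretic formulation, here one cannot exploit transitivity of a large automorphism group to equalise influences a priori, so the averaging used to produce $B$ must be carried out directly.
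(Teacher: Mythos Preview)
The paper does not prove Theorem~\ref{thm:Bourgain}. It is quoted as Bourgain's result from the appendix to~\cite{Fri99} and used as a black box; the paper only discusses the mild strengthening (from~\cite{Fri05}) that a non-negligible family~$\cB$ of such booster sets~$B$ exists, and then combines this with Lemma~\ref{lem:Vojta} to deduce Corollary~\ref{thm:BF}. So there is nothing to compare your argument against here.

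That said, your outline is a fair high-level description of Bourgain's strategy: Margulis--Russo to pass from the derivative bound to a total-influence bound, $p$-biased Fourier expansion, and the observation that a boost~\eqref{eq:bconc} corresponds to nontrivial low-degree Fourier mass. Your caveat about the ``principal obstacle'' is exactly right: the standard hypercontractive/KKL route breaks down when $p=o(1)$, and Bourgain's contribution is precisely a substitute argument (based on conditioning and a decomposition of the Fourier spectrum rather than a direct Bonami--Beckner application) that survives in this regime. However, your sketch stops short of saying what that substitute actually is; the paragraph beginning ``Bourgain's refinement\dots overcomes this with a localisation argument'' names the difficulty but does not resolve it. If you were actually writing a self-contained proof, this is where essentially all the work lies, and what you have written would not yet constitute a proof. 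For the purposes of this paper, though, no proof is expected: the theorem is an imported tool.
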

  
Note that whenever a property $\cA$ (or rather, a series of properties $\cA_n$) has a coarse threshold there exist constants $C$ and $\tau$ such that for infinitely many values of $n$ the hypothesis of the theorem holds.
For applications, it would be problematic if there exists a $B$ with
$|B| \le K$ and~$B \in \cA$, since this would trivialise the conclusion~\eqref{eq:bconc}.  However, as observed in~\cite{Fri05}, the above theorem can be strengthened, without modifying the original proof, to deduce that the set of $B$'s for which the assertion holds has non-negligible measure, i.e.,
there exists a family~$\mathcal{B}$ such that 
\[
 \Pr(B \subseteq \Znp\ \text{for some}\ B\in\cB) > \eta,
\]
where $\eta> 0 $ depends only on $C$ and $\tau$ but not on $n$,
and  every $B \in \mathcal{B}$ satisfies the conclusion of Theorem~\ref{thm:Bourgain}, i.e.,  $|B| \le K$ and
  \[
   \Pr(\Znp\in\cA\,|\, B\subseteq \Znp)>\Pr(\Znp\in\cA)+\delta\,.
  \]

This allows us to make assumptions about $B$ in the application below, as long as the set of~$B$'s violating the assumptions has negligible measure. 
In particular, Lemma~\ref{lem:Vojta} below implies that any collection
of sets~$B\subseteq\Zn$, each of bounded size and 
with $B \rightarrow (k\text{-AP})_2$, appear only with probability tending to 
zero in $\Znp$ for $p=O(n^{-\frac{1}{k-1}})$. Consequently, in our proof we can therefore assume 
that the set $B$, provided by Theorem~\ref{thm:Bourgain} on the assumption that 
$\Znp\rightarrow(k\text{-AP})_2$ has a coarse thresholds, itself fails to have the van der Waerden property, i.e.,
$B\nrightarrow(k\text{-AP})_2$.
\begin{lemma}
\label{lem:Vojta}
 Let $\cB$ be a family of subsets of $\Zn$ with the property that every $B\in\cB$ satisfies 
 $|B|\leq \log\log n$ and $B\rrr$. Then for every $c>0$ and every
 sequence of probabilities $p=p(n)\leq cn^{-\frac{1}{k-1}}$ we have $\Pr(B\subseteq\Znp\ \text{for some}\ B\in\cB)=o(1)$. 
\end{lemma}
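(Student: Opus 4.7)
The plan is to apply the first moment method: I will argue that
\[
	\Pr\bigl(B\subseteq\Znp\ \text{for some}\ B\in\cB\bigr)\le\sum_{B\in\cB}p^{|B|}=o(1).
\]
Since $B\subseteq\Znp$ is monotone in $B$, I may replace each $B\in\cB$ by a minimal Ramsey sub-witness, so I henceforth assume that every $B\in\cB$ is itself minimal for the property $B\rrr$. A pigeonhole argument then forces $|B|\ge 2k-1$ for every such minimal~$B$, since otherwise a balanced 2-colouring of~$B$ places at most $k-1$ elements in each colour class, precluding any monochromatic $k$-AP.

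For a minimal $B$ of size $b$, let $H_B$ be the $k$-uniform hypergraph on $B$ whose edges are the $k$-APs of $\Zn$ contained in $B$. Minimality forces each vertex of $H_B$ to have degree at least~$2$: otherwise a 2-colouring of $B\smallsetminus\{v\}$ witnessing non-Ramseyness would extend to~$B$ by colouring the low-degree vertex $v$ so as to avoid the at most one monochromatic AP through it. Consequently $B=\bigcup E(H_B)$ and $m:=|E(H_B)|\ge 2b/k$. I count such $B\subseteq\Zn$ of size $b\in[2k-1,\log\log n]$ by enumerating ordered coverings by APs: fix an ordering $P_1,\ldots,P_m$ of the AP-edges with each $P_i$ ($i\ge 2$) sharing at least one element with $V_{i-1}:=P_1\cup\cdots\cup P_{i-1}$, which is possible by connectedness of the intersection graph (else $B$ would decompose into smaller Ramsey witnesses, violating minimality). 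Since a $k$-AP in $\Zn$ is determined by any two of its elements, $P_1$ has $O(n^2)$ choices; each subsequent $P_i$ sharing exactly one vertex with $V_{i-1}$ has $O(bn)$ choices (specify the shared vertex, its position in $P_i$, and the common difference); and each $P_i$ sharing two or more vertices with $V_{i-1}$ has only $O(b^2)$ choices. Writing $\alpha$ and $\beta$ for the numbers of APs of each type with $\alpha+\beta=m-1$, the total count is $O(n^{2+\alpha}\cdot b^{O(b)})$, and since $b\le\log\log n$ gives $b^{O(b)}=n^{o(1)}$, we obtain
\[
	\sum_{B\in\cB,\,|B|=b}p^b\le c^b\cdot n^{\,2+\alpha-b/(k-1)+o(1)}.
\]
Letting $s_i$ denote the number of vertices $P_i$ shares with $V_{i-1}$, the identity $\sum_{i\ge 2}s_i=mk-b$ together with the extremal case $s_i=2$ for every second-type AP gives $\alpha=\max(0,\,b-2-m(k-2))$; a short calculation using $m\ge 2b/k>b/(k-1)$ (valid for $k\ge 3$) then yields $2+\alpha-b/(k-1)<0$. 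Summing over $b\in[2k-1,\log\log n]$ produces a geometric series dominated by its first term $O(n^{-1/(k-1)})=o(1)$.

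The main obstacle is to verify that a favourable ordering --- one in which the second-type APs always share exactly two vertices with the current prefix, so as to match the extremal value of $\alpha$ --- actually exists for every minimal~$B$. The key structural input here is non-2-colourability of $H_B$ beyond the mere minimum-degree bound: if no such ordering existed, the intersection graph of $H_B$ would effectively be tree-like, and one could 2-colour $B$ greedily AP by AP, at each step avoiding monochromaticity on the single shared vertex (feasible since $k\ge 3$ leaves at least two free vertices per newly added AP), contradicting $B\rrr$. An ear-decomposition-style argument on the intersection graph of $H_B$ then furnishes the required orderings.
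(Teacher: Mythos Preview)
Your overall strategy --- reduce to minimal Ramsey sets and run a first-moment argument --- matches the paper's. The gap is in the ``favourable ordering'' step, and the justification you give for it is backwards.

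You need, for every minimal $B$, an ordering of the $k$-AP edges with every $s_i\in\{1,2\}$, so that $\alpha$ attains the value $b-2-m(k-2)$. Your argument says: if no such ordering exists then $H_B$ is ``tree-like'' and hence greedily $2$-colourable. But ``no ordering has all $s_i\le 2$'' means some edge is \emph{forced} to meet the prefix in $\ge 3$ vertices --- a density phenomenon, not a sparsity one. The greedy-colouring observation you give is correct but proves the opposite implication: it shows that if \emph{every} $s_i=1$ (a loose tree) then $B$ is $2$-colourable, hence $\beta\ge 1$. That is far weaker than what you need. Indeed, minimum degree $\ge 2$ alone does not imply a favourable ordering: for $k=3$, take the abstract hypergraph on $\{1,\dots,6\}$ with edges $\{1,2,3\},\{3,4,5\},\{5,6,1\},\{2,4,6\}$. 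Every vertex has degree~$2$, any two edges meet in exactly one vertex, so every ordering has $s_2=1$ and hence $\alpha\ge 1$; yet $b-2-m(k-2)=0$. (This particular hypergraph is $2$-colourable, so it is not itself a minimal Ramsey witness --- but it shows that your ordering claim cannot be derived from the degree bound, and your ``ear-decomposition'' sketch does not use non-$2$-colourability in any concrete way.) Without the favourable ordering, your exponent $2+\alpha-b/(k-1)$ need not be negative and the first moment does not close.

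The paper avoids this difficulty by invoking the R\"odl--Ruci\'nski \emph{Deterministic Lemma}: if $B\rightarrow(k\text{-AP})_2$ then $H_B$ contains either a loose cycle $C_\ell$ together with an extra edge meeting it in at least two but fewer than $k$ vertices (type $\cT_1$), or a loose path with an additional edge contained in its vertex set (type $\cT_2$). These two explicit families are exactly the ``denser-than-tree'' substructures your argument is groping for, and for them the first-moment computation is routine: one gets $\EE X_i\le (\log n)^K\cdot p$ for $i=1,2$. The structural content you are missing is precisely this lemma; what you call an ``ear-decomposition-style argument'' is, in effect, a promise to reprove it.
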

Lemma~\ref{lem:Vojta} was implicitly proved in~\cite{RR97}*{Section~7}
(see the Deterministic and the Probabilistic Lemma there). For completeness 
we include a sketch of the proof here.
\begin{proof}[Proof (Sketch)] Let $k\geq 3$ be an integer and let $p=p(n)\leq cn^{-1/(k-1)}$ 
	for some $c\in\RR_{>0}$. For a set $Z\subseteq \Zn$ we consider the auxiliary 
	$k$-uniform hypergraph $H_{Z,k}$ with vertex set~$Z$ and hyperedges 
	corresponding to $k$-APs in $Z$. The Deterministic Lemma~\cite{RR97}*{p.~500}
	asserts that if~$Z\rightarrow(k\text{-AP})_2$, then one of the following configurations 
	must appear as a subhypergraph of~$H_{Z,k}$:
	\begin{enumerate}[label=\rmlabel]
	\item either $H_{Z,k}$ contains a subhypergraph of type $\cT_1$
	consisting of a hyperedge $e_0$ and a
	loose cycle $C_\l$
	of some length~$\l\geq 3$, i.e., $C_\l$ consists of $\l$ hyperedges 
	$e_1,\dots,e_{\l}$ satisfying for 
	every $1\leq i<j\leq \l$
	\[
		|e_i\cap e_j|
		=
		\begin{cases}
			1\,,&\text{if}\ j=i+1\ \text{or}\ (i=1\tand j=\l)\\
			0\,,&\text{otherwise}\,,
		\end{cases}
	\]
	while the additional hyperedge $e_0$ has at least one vertex outside
	the cycle and shares at least two vertices with the cycles, i.e.,
	\[
		2\leq |e_0\cap V(C_\l)|<k\,;
	\]
	\item or $H_{Z,k}$ contains a subhypergraph of type $\cT_2$
	consisting of a non-induced loose path~$P_\l$
	of some length $\l\geq 2$, i.e.,  $\l$ hyperedges $e_1,\dots,e_{\l}$
	satisfying for 
	$1\leq i<j\leq \l$
	\[
		|e_i\cap e_j|
		=
		\begin{cases}
			1\,,&\text{if}\ j=i+1\\
			0\,,&\text{otherwise}\,.
		\end{cases}
	\]
	The condition that $P_\l$ is non-induced means 
	that there exists some hyperedge $e_0$ in $E(H_{Z,k})\setminus E(P_\l)$
	such that $e_0\subseteq V(P_\l)$.
	\end{enumerate}
	
	A simple first moment argument shows that a.a.s.\ 
	no hypergraph on at most $\log n$ vertices 
	of types $\cT_1$ or $\cT_2$ appears in $H_{\Znp,k}$.
	For that let $X_1$ (resp.\ $X_2$) 
	be the random variable counting the number of 
	copies of hypergraphs of type $\cT_1$ (resp.\ $\cT_2$) on at most~$\log n$ vertices in $\Znp$. Below we show that there exists a constant $K=K(k,c)$ such that 
	\begin{equation}\label{eq:X1X2}
		\EE X_1 \leq (\log n)^K\cdot p
		\qqand 
		\EE X_2 \leq (\log n)^K\cdot p
	\end{equation}
	and since $p\leq cn^{-1/(k-1)}\ll (\log n)^{-K}$
	the lemma follows from Markov's inequality.
	
	We start with the random variable~$X_1$. Since the hyperedges of $H_{\Znp,k}$
	correspond to $k$-APs the number $Y_\l$ of  loose cycles $C_\l$ of length at 
	least $\l\geq 3$ in 
	$H_{\Znp,k}$ satisfies
	\[
		\EE Y_\l \leq O(p^{(k-1)\l}\cdot n^\l)=O(c^{(k-1)\l}n^{-\l}\cdot n^\l)=O(c^{k\l})\,.
	\]
	For a given loose cycle $C_\l$ the additional hyperedge $e_0$ (to complete $C_\l$ to 
	a hypergraph of type~$\cT_1$) shares at least two vertices with $C_\l$. However, 
	with these two vertices fixed there are less than $k^2$ possibilities 
	to complete this choice to a $k$-AP in $\Zn$, i.e., these two vertices can be completed 
	in at most $k^2$ ways to form the hyperedge~$e_0$ in $H_{\Zn,k}$.
	In other words, for a fixed loose cycle $C_\l$ on $(k-1)\l$ vertices there 
	are at most~\mbox{$(k-1)^2\l^2\cdot k^2$} possibilities to complete~$C_\l$ to a hypergraph 
	of type~$\cT_1$. Furthermore, since the hyperedge~$e_0$ is required to have 
	at least one vertex outside~$C_\l$ we have
	\[
		\EE X_1 
		< 
		\sum_{\l=3}^{\log\log n}k^4\l^2 p\cdot \EE Y_\l 
		=
		\sum_{\l=3}^{\log\log n}O(\l^2 p\cdot c^{k\l})
		\leq 
		(\log n)^K\cdot p
	\]
	for some constant $K=K(k,c)$,
	which establishes the first estimate in~\eqref{eq:X1X2}.
	
	Similarly, for the random variable $X_2$ we first observe that 
	the expected number $Y'_\l$ of  loose paths $P_\l$ of length at 
	least $\l\geq 2$ in $\Znp$ satisfies
	\[
		\EE Y'_\l \leq O(p^kn^2\cdot p^{(k-1)(\l-1)} n^{(\l-1)})=O(pn\cdot c^{k\l})\,.
	\]
	Since the additional hyperedge~$e_0$ reduces the expected number of choices for 
	at least one of the hyperedges of $P_\l$ from $O(p^{k-1}n)$ to 
	$O(p^{k-1})$ and since $e_0$ is fixed after selecting two of its vertices within $P_\l$ 
	we arrive at 
	\[
		\EE X_2 \leq 
		\sum_{\l=2}^{\log\log n}O\left(\frac{\l^2}{n}\right)\cdot \EE Y'_\l 
		=
		\sum_{\l=2}^{\log\log n}O(\l^2p\cdot c^{k\l})
		\leq
		(\log n)^K\cdot p
	\]
	for some constant $K=K(k,c)$,
	which establishes the second estimate in~\eqref{eq:X1X2}
	and concludes the proof of the lemma.
\end{proof}

We summarise the discussion above in the following corollary of Theorem~\ref{thm:Bourgain}, which is tailored for our 
proof of Theorem~\ref{thm:main}.

\begin{cor}\label{thm:BF}
Assume that the property $\{Z\subseteq \Zn\colon Z\rrr\}$ does not
have a sharp threshold. 
Then there exist constants $c_1$, $c_0$, $\alpha$, $\eps$, $\mu >0 $, and $K$ and a function 
$c(n)\colon \NN\to \RR$ with $c_0<c(n)<c_1$ 
so that for infinitely many values of $n$ and $p=c(n)n^{-\frac{1}{k-1}}$ the following holds.
 
There exists a subset $B$ of $\Zn$ of size at most $K$ with 
$B\not\rightarrow (k\text{-AP})_2$ such that for every family $\cZ$ of subsets from $\Zn$ 
satisfying $\Pr(\Znp\in\cZ) > 1-\mu$ 
there exists a $Z \in \cZ$ so that 
\begin{enumerate}[label=\alabel]
\item \label{item:BF1} $\Pr(Z \cup (B+x)\rightarrow(k\text{-AP})_2) > \alpha$, where $x\in \Zn$ is chosen uniformly at random, and 
\item\label{item:BF2} $\Pr(Z\cup \Z_{n,\eps p}\rightarrow(k\text{-AP})_2) < \alpha/2 $.
\end{enumerate}
\end{cor}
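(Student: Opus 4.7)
The plan is to feed the hypothesis of a coarse threshold into the measure-strengthened version of Bourgain's criterion (Theorem~\ref{thm:Bourgain} and the remark following it), use Lemma~\ref{lem:Vojta} to throw away the few booster sets that are themselves Ramsey, and then finish with a short first-moment calculation that crucially exploits the translation symmetry of $\Zn$.

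First, the assumption that $\{Z\subseteq\Zn:Z\rrr\}$ has no sharp threshold, together with Theorem~\ref{thm:randomvdW}, produces constants $c_0<c_1$, $\tau,C>0$ and a function $c(n)\in(c_0,c_1)$ such that for infinitely many $n$ the value $p=c(n)n^{-1/(k-1)}$ satisfies the hypotheses of Theorem~\ref{thm:Bourgain} for $\cA=\{Z\rrr\}$, that is $\tau<\nu:=\Pr(\Znp\in\cA)<1-\tau$ and $p\,\frac{d\nu}{dp}\le C$. Applied in its family-form, the theorem then supplies $\delta,K,\eta>0$ and a family $\cB$ of sets of size $\le K$ with $\Pr(B\subseteq\Znp\text{ for some }B\in\cB)>\eta$, each $B$ boosting the $\cA$-probability by $\delta$ as in~\eqref{eq:bconc}. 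Since by Lemma~\ref{lem:Vojta} the sub-family of those $B\in\cB$ with $B\rrr$ contributes only $o(1)$ to this probability, I can fix a single $B\in\cB$ of size $\le K$ with $B\not\rightarrow(k\text{-AP})_2$. Because $(\Znp\mid B\subseteq\Znp)$ has the same distribution as $B\cup(\Znp\cap(\Zn\setminus B))$, and hence as $B\cup\Znp$ up to the content of $\Znp$ on $B$ (which is irrelevant after taking the union with $B$), the Bourgain conclusion reads
\[
\Pr_Z(Z\cup B\in\cA)\ge\nu+\delta.
\]

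Now comes the use of symmetry. Writing $f(Z):=\Pr_x(Z\cup(B+x)\in\cA)$ with $x$ uniform on $\Zn$, the translation-invariance of $\Znp$ and of $\cA$ gives $\EE_Z f(Z)=\Pr_Z(Z\cup B\in\cA)\ge\nu+\delta$ by Fubini. In parallel, let $g(Z):=\Pr(Z\cup\Znep\in\cA)$; since $\Znp\cup\Znep$ has the distribution of $\Z_{n,q}$ with $q=(1+\eps)p-\eps p^2$, and integrating $p\,\frac{d\nu}{dp}\le C$ over $[p,q]$ yields $\nu(q)-\nu(p)\le C\log(1+\eps)+o(1)\le C\eps+o(1)$, we get $\EE_Z g(Z)\le\nu+C\eps+o(1)$. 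To finish, condition on the event $Z\notin\cA$, which has probability $1-\nu\ge\tau$ and on which the trivial contributions $f=g=1$ are absent: then $\EE[f\mid Z\notin\cA]\ge\delta$ and $\EE[g\mid Z\notin\cA]\le C\eps/\tau+o(1)$. With $\alpha:=\delta/2$, a reverse Markov bound gives $\Pr(f(Z)\ge\alpha\mid Z\notin\cA)\ge\delta/2$, while Markov gives $\Pr(g(Z)\ge\alpha/2\mid Z\notin\cA)\le 4C\eps/(\tau\delta)+o(1)$, which I make smaller than $\delta/4$ by choosing $\eps$ sufficiently small (in terms of $\tau,\delta,C$). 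The two events intersect in probability at least $\delta/4$ conditional on $Z\notin\cA$, hence at least $\mu:=\tau\delta/4$ unconditionally, so any family $\cZ$ with $\Pr(\Znp\in\cZ)>1-\mu$ must contain some $Z$ satisfying both~\ref{item:BF1} and~\ref{item:BF2}.

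The main obstacle is largely choreographic: the constants must be chosen in the right order, namely $K,\delta,\eta,\tau,C$ first (from the coarse threshold), then $B$ (using Lemma~\ref{lem:Vojta}), then $\alpha$ (from the lower bound on $\EE f$), and only finally $\eps$ and $\mu$ (small enough that the Markov bound on $g$ leaves room to intersect the reverse-Markov event for $f$). Conceptually, the one non-routine step is the symmetry argument: translation-invariance is precisely what converts a single Bourgain booster $B$ into the uniformly-translated statement~\ref{item:BF1}, and this is the technical reason that the whole argument is carried out in $\Zn$ rather than $[n]$.
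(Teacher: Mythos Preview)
Your proof is correct and follows essentially the same route as the paper's sketch: both obtain the booster $B$ from the family-version of Bourgain's criterion together with Lemma~\ref{lem:Vojta}, turn the single-$B$ boost into~\ref{item:BF1} via translation invariance and a reverse-Markov argument, derive~\ref{item:BF2} from the coarse-threshold assumption via Markov, and intersect. One minor quibble: the bound $p\,\tfrac{d\nu}{dp}\le C$ is asserted only at the chosen point $p$, not on all of $[p,q]$, so your integration step is not literally justified; the fix (which the paper also leaves implicit) is to appeal directly to coarseness and, after fixing $\delta$, pick $\eps$ and then $p=c(n)n^{-1/(k-1)}$ so that $\nu((1+\eps)p)-\nu(p)$ is as small as needed---this is compatible with your own remark that $\eps$ must be chosen last.
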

We remark that the $\Pr(\cdot)$ in Corollary~\ref{thm:BF} concern different probability spaces. 
While the assumption $\Pr(\Znp\in\cZ) > 1-\mu$  concerns the binomial random subset $\Znp$,
we consider~$x$ chosen uniformly at random from $\Zn$ in~\ref{item:BF1} and the binomial random subset $\Z_{n,\eps p}$
in~\ref{item:BF2}. 
We close this section with a short sketch of the proof of Corollary~\ref{thm:BF}.

\begin{proof}[Proof (Sketch).]
For $k\geq 3$ we consider the property $\cA=\{Z\subseteq \Zn\colon Z\rrr\}$ and assume that it does not have 
a sharp threshold. Consequently, there exists a function $p=p(n)$ such that for infinitely many~$n$ 
the assumptions of Theorem~\ref{thm:Bourgain} hold, which implicitly yields constants $C$, $\tau$, $\delta$, and $K$.
Let $\bar\cA=\cP(\Zn)\setminus \cA$ be the family of subsets of $\Zn$ that fail to have the van der Waerden property.
Since we assume that the threshold for $\cA$ is not sharp, we may fix $\eps>0$ sufficiently small, such that there must 
be some $\alpha$ with  $ \delta/2 > \alpha > 0$ so that if we let $\cZ'\subseteq \bar\cA$ be the sets $Z\in\bar\cA$ 
for which  
\begin{equation}\label{simple}
\Pr(Z\cup\Z_{n,\eps p} \rightarrow(k\text{-AP})_2) < \alpha/2 
\end{equation}
then $\Pr(\Znp\in\cZ'\,|\,\Znp\in\bar\cA)\geq 1-\delta/4$.

Also for  $p=p(n)$ we have $\tau < \Pr(\Znp\rrr) = \Pr(\Znp\in\cA) < 1-\tau$, so by Theorem~\ref{thm:randomvdW} there exist some constants $c_1\geq c_0>0$
such that 
$p=p(n)=c(n)n^{-\frac{1}{k-1}}$ for some function $c(n)\colon \NN\to \RR$ satisfying $c_0\leq c(n)\leq c_1$.
Strictly speaking, we should use the version of Theorem~\ref{thm:randomvdW} for $\Zn$ instead of $[n]$. However, 
it is easy to see that the $1$-statement for random subsets of~$[n]$ implies the $1$-statement for 
random subsets of $\Zn$ (up to a different constant $c_1$) and
the proof of the $0$-statement from~\cite{RR97} can be straightforwardly adjusted for subsets of $\Zn$.

Moreover, for any such $n$ Theorem~\ref{thm:Bourgain} yields a family $\cB$ of subsets of $\Zn$ each of size at most $K$
such that~\eqref{eq:bconc} holds and an element of $\cB$ appears as a subset of $\Znp$ with probability at least $\eta$.
Consequently, Lemma~\ref{lem:Vojta} asserts that at least one such $B\in \cB$ fails to have the van der Waerden property itself, i.e., 
$B\not\rightarrow (k\text{-AP})_2$. By symmetry it follows from~\eqref{eq:bconc}, that the same holds for every translate $B+x$
with $x\in \Zn$. In particular, consider the family $\cZ''\subseteq \bar\cA$ of all sets $Z\in\bar\cA$ such that for at least $(\delta/2) n$ translates $B+x$
we have $Z\cup (B+x)\rrr$, i.e., 
\[
	\Pr(Z \cup (B+x)\rightarrow(k\text{-AP})_2) > \delta/2 > \alpha
\]
for $x$ chosen uniformly at random from $\Zn$. Then $\Pr(\Znp\in\cZ''\,|\,\Znp\in\bar\cA)\geq \delta/2.$
  So, taking  $\mu < \delta\cdot\Pr(\Znp\in\bar\cA)/8$ we  have that if   $\Pr(\Znp\in\cZ)\geq 1-\mu$ 
then $\cZ \cap \cZ' \cap \cZ'' \neq \emptyset$. Any $Z$ in this non-empty family has the desired properties.
\end{proof}

\section{Lemmas and the proof of the main theorem}
In this section we state all the necessary notation and lemmas to give the proof of Theorem~\ref{thm:main}. 
We start with an outline of this proof.
\subsection{Outline of the proof}
The point of departure is Corollary~\ref{thm:BF}  and we will derive a contradiction to its second property.
To this end, we consider an appropriate set $Z$ as given by Corollary~\ref{thm:BF} and let 
$\Phi$ denote the set of all colourings of $Z$ without a monochromatic $k$-AP. The main obstacle 
is to find a partition of~$\Phi$ into $i_0=2^{o(pn)}$ 
classes $\Phi_1,\dots,\Phi_{i_0}$, such that  any two colourings $\phi$, $\phi'$ from any partition class~$\Phi_i$ agree on a relatively dense subset~$C_i$ of~$Z$, i.e.\
$\phi(z)=\phi'(z)$ for all $z\in C_i$. 
Let~$B_i$ denote the larger monochromatic subset of~$C_i$, say  of colour blue.
We consider the the set $F(B_i)$ of those elements in $\Zn$, which extend 
a blue $(k-1)$-AP in $B_i$ to an $k$-AP.
Note that Corollary~\ref{thm:BF} allows us to impose
further conditions on~$Z$ as long as $\Znp$ satisfies them almost surely. 
One of these properties will assert that $F(B_i)$ is of size linear in~$n$
Consequently, by a quantitative version of 
Szemer\'edi's theorem we know that the number of $k$-APs in the focus of $B_i$ 
is~$\Omega(n^2)$. 
Consider $U_i=(\Zn)_{\eps p}\cap F(B_i)$ and note that if any element of $U_i$ is coloured blue then this induces a blue $k$-AP with $Z$ under any colouring
$\phi\in\Phi_i$. 
Hence, to extend any $\phi\in\Phi_i$ to a colouring 
of $Z\cup U_i$ without a monochromatic $k$-AP it is necessary that all elements in $U_i$ are coloured red. Consequently, the probability of
a successful extension of any colouring in $\Phi_i$ is bounded from above by the probability that $U_i$ does not contain a $k$-AP. This, however, is at most 
$\exp(-\Omega(p^kn^2))=\exp(-\Omega(pn))$ by Janson's inequality. 
We conclude by the union bound that after the second round, i.e.~$\Znep$, the probability 
that any $k$-AP-free colouring of $Z$ survives is $i_0\exp(-\Omega(pn))=o(1)$ which contradicts~\ref{item:BF2}.

To establish the above mentioned partition of $\Phi$  
we will define an auxiliary hypergraph~$H$ in such a way that every $\phi\in \Phi$ can be associated 
with a hitting set of $H$. As the complements of hitting sets are independent and as $H$ will be ``well-behaved''
we can apply a structural result of Balogh, Morris and Samotij~\cite{BMS12} on independent sets in uniform hypergraphs 
(see Theorem~\ref{thm:BMS}) to ``capture'' the hitting sets 
of $H$ and hence a partition of $\Phi$ with the properties mentioned
above (see Lemma~\ref{lem:cores}). We remark, that the proof of showing that 
$H$ is indeed well-behaved will use~\ref{item:BF1} and additional properties 
of $Z$, which hold a.a.s.\ in $\Znp$.
Next we will introduce the necessary concepts along with the lemmas needed to give the proof of the main theorem.
The proof of Theorem~\ref{thm:main} will then be given in Section~\ref{sec:proofmain}.

\subsection{Lemmas}
We  call $(Z,B)$ an \emph{interacting pair} if
$Z\not\rightarrow(k\text{-AP})_2$ and $B\not\rightarrow(k\text{-AP})_2$ but $Z\cup B\rightarrow(k\text{-AP})_2$.
Further,  $(Z,B,X)$ is called an \emph{interacting triple} if  $(Z,B+x)$ is interacting for all $x\in X$.
Note that Corollary~\ref{thm:BF} asserts  that there is an interacting triple $(Z,B,X)$ with  $|X|>\alpha n$.
In the following we shall concentrate on elements  which are decisive for interactions. 
Given a (not necessarily interacting) pair $(A,B)$
we say that an element $a\in A$ \emph{focuses} on $b\in B+x$ if there are $k-2$ further elements in $A\cup B$  forming a $k$-AP with $a$ and~$b$.

The set of vertices of particular interest, given a pair $(A,B)$,  is 
\[M(A,B)=\{a\in A\colon \text{there is a } b\in B \text{ such that $a$ focuses on } b\}\]
and for a triple $(A,B,X)$ we define the hypergraph $H=H(A,B,X)$ with the vertex set~$A$ and 
the edge set consisting of all $M(A,B+x)$ with $x\in X$. 
We are interested in the hypergraph $H(Z,B,X)$ with an interacting triple  $(Z,B,X)$.
We will make use of the fact that Corollary~\ref{thm:BF} allows us to put further restrictions on $Z$ as long these events occur a.a.s.\ for $\Znp$.
The requirement we want to make is that  the maximum degree and co-degree of $H(Z,B,X)$ are well behaved.

\begin{lemma}
\label{lem:degreeH}
For given $c_1, k, K$ and all $B\subset \Z/n\Z$ of size $|B|\leq K$ the following holds a.a.s.\ for $2\log n\leq p\leq c_1n^{-1/(k-1)}$:
There is a set $Y\subset \Z/n\Z$ of size at most $n^{1-1/(k-1)}\log n$ such that the hypergraph $H=H(\Znp,B, (\Z/n\Z)\setminus Y)$ satisfies
\begin{enumerate}[label=\nlabel]
\item $2pn\geq v(H)\geq pn/2,$
\item $\Delta_1(H)\leq 10k^3Kp^{k-2}n$, and
\item $\Delta_2(H)\leq 8\log n$,
\end{enumerate}
where $\Delta_1(H)$ and $\Delta_2(H)$ denote the maximum vertex degree of~$H$
and maximum co-degree of pairs of vertices of~$H$.

\end{lemma}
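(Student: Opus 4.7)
The plan is to verify the three conditions in turn. Condition~(1) is immediate: the vertex set of $H$ is $\Znp$ regardless of the choice of $Y$, so $v(H)=|\Znp|$, and since $pn\to\infty$ the Chernoff bound gives $pn/2\le|\Znp|\le 2pn$ asymptotically almost surely.

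For conditions~(2) and~(3) I first estimate moments on the unrestricted hypergraph $H_0:=H(\Znp,B,\Zn)$. The event $\{a\in M(\Znp,B+x)\}$ is witnessed by a template $(\pi,v,b')$, where $\pi$ is a $k$-AP through $a$, $v\in\pi\setminus\{a\}$, $b'\in B$, $v=b'+x$ (which determines $x$), and $\pi\subseteq\Znp\cup(B+x)$. For each fixed $a$ there are at most $k(k-1)Kn$ such templates, each requiring up to $k-2$ elements of $\pi$ to lie in $\Znp$, so
\[
   \EE[\deg_{H_0}(a)\mid a\in\Znp]\le k(k-1)Knp^{k-2}.
\]
An analogous calculation with two templates yields $\EE[\codeg_{H_0}(a_1,a_2)\mid a_1,a_2\in\Znp]\le k^4K^2np^{2(k-2)}=O(1)$, using $p\le c_1n^{-1/(k-1)}$.

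I then construct $Y$ greedily: process the $x\in\Zn$ in arbitrary order and place $x\in Y$ precisely when adjoining the hyperedge $M(\Znp,B+x)$ would push some vertex's degree above $D^*:=10k^3Kp^{k-2}n$ or some pair's co-degree above $8\log n$. By construction $H=H(\Znp,B,\Zn\setminus Y)$ satisfies~(2) and~(3). Charging each $x\in Y$ to a vertex or pair whose threshold it would have violated at the moment of its removal yields
\[
   |Y|\le\sum_{a\in\Znp}(\deg_{H_0}(a)-D^*)_+ + \sum_{\{a_1,a_2\}}(\codeg_{H_0}(a_1,a_2)-8\log n)_+.
\]

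The main obstacle is bounding these excess sums by $n^{1-1/(k-1)}\log n$ asymptotically almost surely. The key combinatorial input is that two distinct $k$-APs through a common vertex $a$ share no further element unless their common differences obey a rigid linear relation, so for most pairs of witness templates through $a$ the $(k-2)$-subsets of $\Zn$ whose inclusion in $\Znp$ is required are disjoint. This makes the witness indicators essentially pairwise independent and yields a moment bound of the form $\EE[\deg_{H_0}(a)^r]\le (C\cdot k(k-1)Knp^{k-2})^r$ for all integers $r\le\log n$, with some $C=C(k,K)$. Choosing $r=\lceil c\log n\rceil$ with $c$ sufficiently large and applying Markov's inequality yields $\Pr[\deg_{H_0}(a)>D^*]\le n^{-A}$ for any prescribed constant $A$, whence $\EE[(\deg_{H_0}(a)-D^*)_+]$ is negligible compared to $n^{-1/(k-1)}\log n$. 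Summing over $a\in\Zn$ and running the analogous argument for the co-degree contribution gives $\EE[|Y|]=o(n^{1-1/(k-1)}\log n)$, and the claim follows from Markov's inequality.
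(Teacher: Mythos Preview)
There are two genuine gaps.

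First, your first-moment estimate for the codegree is wrong. You write $\EE[\codeg_{H_0}(a_1,a_2)\mid a_1,a_2\in\Znp]\le k^4K^2np^{2(k-2)}=O(1)$, implicitly assuming that the two witness $k$-APs (one through $a_1$, one through $a_2$, both meeting $B+x$) always require $2(k-2)$ \emph{distinct} elements of $\Znp$. They need not: the two APs can share elements, or one of them can meet $B+x$ in two or more points, and then far fewer random elements are needed. Concretely, for $k=3$ take any pair with $(a_1-a_2)/2\in B-B$; then for every $c\in\Zn$ the two $3$-APs $\{a_1,c,(a_1+c)/2\}$ and $\{a_2,c,(a_2+c)/2\}$ give a common translate $x$ with $P=P'=\{c\}$, so $\codeg_{H_0}(a_1,a_2)\ge|\Znp|-O(1)=\Theta(pn)$, not $O(1)$. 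This invalidates both your first-moment bound and the tail estimate built on it. The paper's $Y$ is designed precisely to kill these degenerate configurations: it is defined explicitly as the set of ``bad'' translates $x$ for which some witness pair falls outside the generic class $\cP_0$ (Fact~13), and a direct first-moment computation shows $\EE|Y|=O(pn)$. After removing bad $x$'s every surviving witness pair really does require $2(k-2)$ fresh elements of $\Znp$, and only then do the counts you wrote become valid.

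Second, even for the vertex degree your concentration step is not justified. Pairwise near-independence of the witness indicators controls the variance, but it says nothing about the $r$-th moment for $r=\Theta(\log n)$; the inequality $\EE[\deg_{H_0}(a)^r]\le(C\mu)^r$ simply does not follow from the ``rigid linear relation'' observation you cite. The paper's route is different and cleaner: it notes (Fact~12) that the $(k-2)$-sets $P$ arising from the generic templates through a fixed vertex form a hypergraph of maximum degree $O(k^3K)$, splits this hypergraph into $O(1)$ matchings, and applies Chernoff to each matching of independent indicators (Fact~14). This gives $\Pr[\deg(z)>D^*]\le O(n^{-4})$ for every $z$, and likewise for codegrees, after which a union bound over all vertices and pairs finishes the proof directly---no greedy construction of $Y$ and no excess-sum charging is needed.
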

\noindent
We postpone the proof of Lemma~\ref{lem:degreeH}. It can be found in Section~\ref{sec:lem6and10}.

A set of vertices of a hypergraph is called a hitting set if it intersects every edge of this hypergraph.
The conditions in Lemma~\ref{lem:degreeH} will be used to control  
 the hitting sets of $H(Z,B,X)$ which play an important r\^ole as explained in the following. A colouring of a set is called $k$-AP free if it 
 does not exhibit a monochromatic $k$-AP. For an interacting triple~$(Z,B,X)$ we
fix a $k$-AP free colouring $\sigma\colon B\to\{\text{red,blue}\}$ of $B$, which 
exists since $B\not\rightarrow(k\text{-AP})_2$.
We also consider the ``same'' colouring for all its translates $B+x$. 
More precisely, let $B=\{b_1,\dots,b_{|B|}\}$ and
for every $x\in X$ we consider 
the $k$-AP free colouring $\sigma_x\colon(B+x)\to \{\text{red,blue}\}$ 
defined by 
$\sigma_x(b_i+x)=\sigma(b_i)$.

For  any $k$-AP-free colouring $\phi$ of $Z$ and any $x\in X$ the colouring of $Z\cup (B+x)$ induced by~$\sigma_x$ and $\phi$ 
must exhibit a monochromatic $k$-AP (intersecting both $Z$ and $B+x$)  since $(Z,B+x)$ is interacting. Hence,
for each $x\in X$ the edge $M(Z,B+x)$ contains
an element~$z$ focussing on an element
$b\in B+x$ such that  $\phi(z)=\sigma_x(b)$.
Such a vertex $z\in M(Z,B+x)$ we call \emph{activated} by $\sigma_x$ and $\phi$ and we define the set of activated 
vertices
\[A^{\sigma_x}_\phi(Z,B+x)=\{z\in Z\colon z\text{ is activated by $\sigma_x$ and }\phi\}\]
which is a non-empty subset of $M(Z,B+x)$. 
\begin{observation}
\label{obs:hittingset}
Suppose that we are given an interacting triple $(Z,B,X)$, 
a $k$-AP free colouring 
$\sigma\colon B\to \{\text{red,blue}\}$ of $B=\{b_1,\dots,b_{|B|}\}$, and suppose for every $x\in X$
the translate $B+x$ is coloured with the same pattern $\sigma_x$. 
Further, let $\phi$ be a $k$-AP free colouring of $Z$. Then the set of activated vertices
\[A_\phi=A^\sigma_\phi(Z,B,X)=\bigcup_{x\in X}A^{\sigma_x}_\phi(Z,B+x)\]
is a hitting set of $H(Z,B,X)$. 
\end{observation}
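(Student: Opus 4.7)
The plan is to verify the hitting-set condition one hyperedge at a time. For an arbitrary $x \in X$, the goal is to produce a single vertex $z \in A^{\sigma_x}_\phi(Z, B+x)$, since every activated vertex lies in $M(Z, B+x)$ by definition, so such a $z$ automatically places a point of $A_\phi$ inside the hyperedge $M(Z, B+x)$ of $H(Z, B, X)$. Doing this for every $x \in X$ immediately gives the hitting-set conclusion.

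To produce such a $z$ I would exploit the interacting property of $(Z, B+x)$: namely $Z \cup (B+x) \rrr$. The combined $2$-colouring $\chi$ of $Z \cup (B+x)$ that restricts to $\phi$ on $Z$ and to $\sigma_x$ on $B+x$ (with $\phi$ given priority on the possible overlap $Z \cap (B+x)$) must therefore contain a monochromatic $k$-AP $T$. The two $k$-AP-freeness hypotheses on $\phi$ and $\sigma_x$ immediately force $T$ to straddle both sides: if $T \subseteq Z$ then $\phi$ would exhibit a monochromatic $k$-AP in $Z$, and if $T \subseteq B+x$ then $\sigma_x$ would exhibit one in $B+x$. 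Hence $T \cap (Z \setminus (B+x)) \neq \emptyset$ and $T \cap ((B+x) \setminus Z) \neq \emptyset$.

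Now pick $z \in T \cap (Z \setminus (B+x))$ and $b \in T \cap ((B+x) \setminus Z)$. The remaining $k-2$ points of $T$ lie in $Z \cup (B+x)$, so together with $z$ and $b$ they witness that $z$ focuses on $b$; in particular $z \in M(Z, B+x)$. Since $T$ is $\chi$-monochromatic and our choices of $z$ and $b$ lie strictly on the $Z$-side and the $(B+x)$-side respectively, we have $\chi(z) = \phi(z)$ and $\chi(b) = \sigma_x(b)$, so $\phi(z) = \sigma_x(b)$. Thus $z$ is activated by $\sigma_x$ and $\phi$, placing $z$ in $A^{\sigma_x}_\phi(Z, B+x) \subseteq A_\phi$.

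At bottom, this observation is a definition-unpacking exercise, so there is no genuine combinatorial or probabilistic obstacle. The only point that benefits from a second look is the consistent handling of the overlap $Z \cap (B+x)$ so that the colours of $z$ and $b$ can be read off $\phi$ and $\sigma_x$ unambiguously; selecting $z$ and $b$ from the strict sides of $T$, as indicated, dispenses with this minor subtlety.
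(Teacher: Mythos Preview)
Your argument is correct and mirrors the paper's own justification, which is given in the paragraph immediately preceding the observation: merge $\phi$ and $\sigma_x$ into a colouring of $Z\cup(B+x)$, use the interacting property to force a monochromatic $k$-AP that meets both $Z$ and $B+x$, and read off an activated vertex from it. One small remark on your overlap discussion: with $\phi$ given priority on $Z\cap(B+x)$, the step ``$T\subseteq B+x$ would make $\sigma_x$ exhibit a monochromatic $k$-AP'' is not literally valid (since $\chi$ need not equal $\sigma_x$ on the overlap), but you only need $T\cap Z\neq\emptyset$---any such $z$ already satisfies $\chi(z)=\phi(z)$ under your convention---and that does follow, because $T\subseteq (B+x)\setminus Z$ would genuinely be $\sigma_x$-monochromatic.
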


The following lemma shows that the hitting sets of  well-behaved uniform hypergraphs 
can be ``captured'' by a small number of  sets of large size 
called cores. 

\vbox{
\begin{lemma}\label{lem:cores}
For every natural $k\geq 3$, $\ell\ge 2$ and all positive $C_0$, $C_1$ there are $C'$ and
$\beta>0$ such that the following holds. 

If $H$ is an $\ell$-uniform hypergraph with $m$ vertices, $C_0 m^{1+1/(k-2)}$ edges,
$\Delta_1(H)\le C_1 m^{1/(k-2)}$, and $\Delta_2(H)\le C_1 \log m$ then there is a family $\cC$ of subsets of $V(H)$, which we shall call \emph{cores}, such that
\begin{enumerate}[label=\rmlabel]
\item\label{it:cores1} for $t=1-1/(k-2)(\ell-1)$ we have \[|\cC|\leq \sum_{i=1}^{C'm^t}\binom{m}{i}\]
\item\label{it:cores2} $|C|\ge \beta m$ for every $C\in \cC$, and
\item\label{it:cores3} every hitting set of $H$ contains some $C$ from $\cC$.
\end{enumerate}
\end{lemma}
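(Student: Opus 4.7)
The plan is to apply the hypergraph container theorem of Balogh--Morris--Samotij (Theorem~\ref{thm:BMS}) directly to $H$. The key observation is that $A\subseteq V(H)$ is a hitting set of $H$ if and only if $V(H)\setminus A$ is an independent set of $H$. Hence, if we can produce a family $\cF$ of containers such that every independent set of $H$ is a subset of some $F\in\cF$, then setting $\cC:=\{V(H)\setminus F:F\in\cF\}$ immediately yields property~\ref{it:cores3}, and the remaining task is to show that $\cC$ satisfies~\ref{it:cores1} and~\ref{it:cores2}.

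To invoke the container theorem with the correct parameters I first verify its codegree hypotheses. The average degree of $H$ is $d=\ell\,e(H)/m=\Theta(m^{1/(k-2)})$, comparable up to a constant to $\Delta_1(H)$. Since $H$ is $\ell$-uniform and simple, $\Delta_\ell(H)=1$, while for $2\le j<\ell$ the trivial inequality $\Delta_j(H)\le\Delta_2(H)\le C_1\log m$ applies. The binding codegree constraint turns out to come from $j=\ell$: one needs $\tau$ with $\tau^{\ell-1}d\ge c$ for a suitable constant $c>0$, which forces the choice $\tau=\Theta(m^{-1/((k-2)(\ell-1))})$. For the intermediate $j$ the inequality $\tau^{j-1}d\ge c\,\Delta_j$ also holds with this $\tau$ (for $\ell\ge 3$ the ratio $d/\log m$ outpaces $\tau^{-(j-1)}$ for every $j<\ell$, while for $\ell=2$ only the $j=\ell$ constraint is relevant since $\Delta_2=1$ automatically). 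The container theorem then delivers a family $\cF$ with
\[
|\cF|\;\le\;\sum_{i=1}^{\lceil\tau m\rceil}\binom{m}{i}\;\le\;\sum_{i=1}^{C'm^t}\binom{m}{i},
\]
for $t=1-1/((k-2)(\ell-1))$ and an appropriate $C'=C'(k,\ell,C_0,C_1)$, together with the additional guarantee that $e(H[F])\le(1-\delta)e(H)$ for each $F\in\cF$ and some absolute $\delta=\delta(\ell)>0$. This establishes~\ref{it:cores1}.

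Property~\ref{it:cores2} then drops out from a one-line degree count: if the core $C=V(H)\setminus F$ has $|C|=s$, then removing these $s$ vertices destroys at most $s\cdot\Delta_1(H)\le sC_1m^{1/(k-2)}$ edges of $H$, so
\[
\delta C_0 m^{1+1/(k-2)}\;\le\;e(H)-e(H[F])\;\le\;sC_1m^{1/(k-2)},
\]
which forces $s\ge(\delta C_0/C_1)\,m=:\beta m$ with $\beta$ depending only on $k$, $\ell$, $C_0$, and $C_1$.

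The principal obstacle is the parameter bookkeeping: one must cite a quantitative version of the container theorem whose codegree condition is satisfied with the stated $\tau$, verify that among the constraints for $j=2,\dots,\ell$ the binding one is indeed $j=\ell$ (so that the exponent in $t$ comes out exactly as $1/((k-2)(\ell-1))$ and no spurious logarithmic factor appears in $C'm^t$), and check that the edge-reduction factor $\delta$ can be taken to be an absolute constant independent of $m$ so that the resulting $\beta$ is uniform in $n$. Once this bookkeeping is completed, each of the three conclusions follows essentially for free.
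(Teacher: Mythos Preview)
Your proposal is correct and follows essentially the same approach as the paper: apply the Balogh--Morris--Samotij container theorem to $H$ with the parameter $p=\tau=m^{-1/((k-2)(\ell-1))}$, take complements of the containers to obtain the cores, and use the edge-defect of each container together with the bound on $\Delta_1(H)$ to show each core has size at least $\beta m$. The only cosmetic difference is that the paper invokes the specific formulation of Theorem~\ref{thm:BMS} with an explicit increasing family $\cF=\{A:|A|\ge m/2,\ e(H[A])\ge e(H)/2\}$ and the fingerprint functions $f,g$, whereas you cite the (equivalent) ``edge-defect'' output $e(H[F])\le(1-\delta)e(H)$ directly; either way the degree-count $\delta\,e(H)\le |V\setminus F|\,\Delta_1(H)$ gives~\ref{it:cores2}.
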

}
Lemma~\ref{lem:cores} will follow from the main result from~\cite{BMS12}. The proof can be found in Section~\ref{sec:cores}.
As it turns out, we can insist that  the interacting triple $(Z,B,X)$ guaranteed by Corollary~\ref{thm:BF} has the additional property that $X$ contains a suitable subset  
$X'\subset X$ so that the hypergraph $H(Z,B,X')$ is uniform. In this case
Lemma~\ref{lem:cores} allows us to partition the sets of all $k$-AP free colourings of $Z$ into a small number of partition classes 
$\{\Phi_C\}_{C\in \cC}$, each  represented by a big core~$C\in \cC$.

However, we wish to refine the partition classes further so that every two colourings~$\phi$,~$\phi'\in\Phi_C$  from the same partition class agree on a large vertex set. 
This can be accomplished by applying Lemma~\ref{lem:cores} to $H(Z,B,X')$ for a more refined   subset $X'\subset X$. 
Indeed, we will make sure that there is a set which guarantees 
that the colours of the activated vertices $A_{\phi}$ under $\phi$ as defined in Observation~\ref{obs:hittingset}  are already ``determined'' 
by~$\sigma$. 
This implies that any two colourings $\phi$, $\phi'\in\Phi_C$ agree on $A_{\phi}\cap A_{\phi'}$, hence, on the core~$C$ representing them, i.e.\
$\phi(z)=\phi'(z)$ for all $z\in C$.
To make this formal we need the following definitions.

In the following we fix some linear order
on the elements of $\Zn$, which we denote simply by $<$.
A triple $(Z,B,X)$ is called \emph{regular} if for all $x\in X$
every element of~$Z$ focuses on at most one element in $B+x$. 
Given   a regular triple $(Z,B,X)$ and an $x\in X$ 
let $z_1<\dots< z_\ell$ denote the  elements of $M_x=M(Z,B+x)\in H(Z,B,X)$. 
We say that $z\in M(Z,B+x)$ has \emph{index} $i$ if $z=z_i$ and the triple $(Z,B,X)$ is called \emph{index consistent}
if   for any element $z\in Z$ and any two 
edges~$M_x$,~$M_{x'}$ containing $z$ the indices of $z$ in $M_x$ and~$M_{x'}$ are the same.

Further, let $B=\{b_1,\dots,b_{|B|}\}$. We associate to the edge $M_x$ its \emph{profile} which is the function  
$\pi\colon  [\ell]\to [|B|]$ indicating which $z_i$ focusses on which~$b_j+x$, formally: $\pi(i)=j$ if $z_i$ 
focusses on~$b_j+x$. Since $(Z,B,X)$ is regular, 
each $z\in M_x$ focuses on exactly one element from $B+x$, thus, the profile of $M_x$ is well-defined and unique. We call $\ell$ the \emph{length} of the profile and we say 
that the triple $(Z,B,X)$ has profile $\pi$ with length $\ell$ if all edges of $H(Z,B,X)$ do.
We summarise the desired properties for the hitting sets of $H(Z,B,X)$ associated to  $k$-AP free colourings  of $Z$. 
\vbox{\begin{observation}
\label{obs:hittingset2}
Fix some linear order on~$\Zn$.
Suppose the triple $(Z,B,X)$ in Observation~\ref{obs:hittingset} 
with $B=\{b_1,\dots,b_{|B|}\}$ is index consistent and has profile $\pi$.  
Let  $A_\phi=A^\sigma_\phi(Z,B,X)$ be the vertex set activated by $\phi$ and $\sigma$ as defined in Observation~\ref{obs:hittingset}.
Then for any vertex $z\in A_\phi$ the colour~$\phi(z)$ of $z$ is already determined 
by~$\sigma$ and the (unique) index  $i$ of $z$, indeed, 
$\phi(z)=\sigma(b_{\pi(i)})$. In particular, any two  $k$-AP free colourings  $\phi$ and $\phi'$ of $Z$ agree on $A_{\phi}\cap A_{\phi'}$, i.e.\ $\phi(z)=\phi'(z)$ for all 
$z\in A_{\phi}\cap A_{\phi'}$.
\end{observation}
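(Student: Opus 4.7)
The plan is to prove the two claims by carefully unpacking the definitions of activation, regularity, the profile $\pi$, and index consistency, in that order; no new combinatorial or probabilistic input is needed.

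First I would fix an arbitrary $z\in A_\phi$ and produce the ``canonical witness'' that activates it. By definition of $A_\phi=\bigcup_{x\in X}A_\phi^{\sigma_x}(Z,B+x)$, there exists some $x\in X$ with $z\in A_\phi^{\sigma_x}(Z,B+x)$. Unpacking the definition of activation, $z$ must then lie in $M_x=M(Z,B+x)$ and there must be some $b\in B+x$ on which $z$ focuses and such that $\phi(z)=\sigma_x(b)$. Here is the first place where regularity of $(Z,B,X)$ is used: since each element of $Z$ focuses on at most one element of $B+x$, the element $b$ is unique. Writing the elements of $M_x$ in increasing order as $z_1<\dots<z_\l$, let $i$ be the index of $z$ in $M_x$, i.e.\ $z=z_i$. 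By the definition of the profile $\pi$, the unique element of $B+x$ on which $z_i$ focuses is $b_{\pi(i)}+x$, so $b=b_{\pi(i)}+x$. By the way we translated $\sigma$ to $\sigma_x$, we have $\sigma_x(b_{\pi(i)}+x)=\sigma(b_{\pi(i)})$. Combining, $\phi(z)=\sigma(b_{\pi(i)})$.

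Next I would use index consistency to eliminate the apparent dependence of this expression on the particular witness $x$. If $z$ also appears in some other edge $M_{x'}$ (with $x'\in X$), index consistency asserts that the index of $z$ in $M_{x'}$ is again $i$. Since the triple has a single global profile $\pi$, the same colour $\sigma(b_{\pi(i)})$ is prescribed by any choice of activating $x$. Therefore $\phi(z)$ depends only on $\sigma$ and on the index $i$ of $z$, and not on $\phi$ itself beyond the mere fact that $\phi$ activates $z$; this is the first conclusion of the observation.

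Finally, the second conclusion is immediate. If $z\in A_\phi\cap A_{\phi'}$ for two $k$-AP free colourings $\phi$ and $\phi'$ of $Z$, applying the identity just proved to each of them gives
\[
\phi(z)=\sigma(b_{\pi(i)})=\phi'(z),
\]
where $i$ is the (well-defined) index of $z$. Since there is no substantive obstacle here — the whole statement is a matter of threading together regularity, the definition of the profile, and index consistency — the only point that requires attention is making sure the chain of definitions is applied to the correct edge of $H(Z,B,X)$, which is exactly what index consistency is designed to guarantee.
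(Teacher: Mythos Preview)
Your proposal is correct and is precisely the intended verification; the paper states this observation without proof, treating it as immediate from the definitions of regularity, profile, and index consistency that precede it. Your careful unpacking of those definitions --- in particular the use of regularity to pin down the unique $b=b_{\pi(i)}+x$ and of index consistency to make the index $i$ independent of the witnessing edge --- is exactly what the paper relies on implicitly.
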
}

The following lemma will allow us to restrict considerations to  index consistent triples  with a bounded length profile.

\begin{lemma}
\label{lem:goodtriple} For all $c_1>0$, $k$, $K$ and $\alpha>0$ there exist $L$ and $\alpha'>0$ such that 
for all $B\subset \Z/n\Z$ of size $|B|\leq K$ and $p\leq c_1n^{-1/(k-1)}$ the following holds a.a.s.

For any linear order on~$\Zn$ there is a set $Y_n\subset \Z/n\Z$ of size at most $n^{1-1/(k-1)}\log n$ such that 
 for every set $X\subset \Z/n\Z$ of size $X\geq \alpha n$ there is a set $X'\subset X\setminus Y_n$ of size 
 $|X'|\geq \alpha' n$ and a profile $\pi$
of length at most $L$ such that $(\Znp,B,X')$ is index consistent and has profile $\pi$.
\end{lemma}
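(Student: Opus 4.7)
The plan is to construct $Y_n$ from two error sources and then thin $X \setminus Y_n$ in three stages: first to bound edge sizes of $H(\Znp, B, \cdot)$ by a constant, then to pigeonhole onto a common profile, and finally to force index consistency via a random index assignment. Each stage only loses a constant factor, so the final subset $X'$ remains of linear size.

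To construct $Y_n$, I would first apply Lemma~\ref{lem:degreeH} to obtain a.a.s.\ a set $Y \subseteq \Zn$ of size at most $\tfrac12 n^{1-1/(k-1)}\log n$ such that $H := H(\Znp, B, \Zn\setminus Y)$ satisfies the stated bounds on $v(H)$, $\Delta_1(H)$, and $\Delta_2(H)$. I would then define $Y' \subseteq \Zn$ to be the set of $x$ for which $(\Znp, B, \{x\})$ fails to be regular, i.e.\ some $z \in \Znp$ focuses on two distinct elements of $B+x$. A first-moment calculation then gives $\EE|Y'| = O(n^{1-1/(k-1)})$, which is smaller than the target size by a factor of $\log n$; the key point is that any non-regularity witness consists of two $k$-APs sharing a common vertex $z$, and even in the most degenerate sub-case in which all $2k-2$ auxiliary vertices lie in $B+x$, the vertex $z$ itself still must lie in $\Znp$, while the number of $B$-internal configurations per $x$ is only a constant $O(K^{2k-1})$. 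Setting $Y_n := Y\cup Y'$ thus yields $|Y_n| \leq n^{1-1/(k-1)}\log n$ a.a.s., and for every $x \notin Y_n$ the triple $(\Znp, B, \{x\})$ is regular and so its edge $M_x := M(\Znp, B+x)$ has a well-defined profile $\pi_x$ of length $|M_x|$.

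For the thinning, the degree bound from Lemma~\ref{lem:degreeH} yields $\sum_{x\notin Y}|M_x| \leq v(H)\,\Delta_1(H) \leq C_0\, n$ for some $C_0 = C_0(k,K,c_1)$, so choosing the constant $L := \lceil 4 C_0/\alpha\rceil$ ensures that fewer than $\alpha n/4$ of the $x \in X\setminus Y_n$ satisfy $|M_x| > L$; hence the subset $X_1 \subseteq X\setminus Y_n$ with $|M_x|\le L$ has $|X_1| \geq \alpha n/2$ for $n$ large. Since there are at most $(L+1)!\,K^L$ profiles of length $\leq L$, pigeonhole yields $X_2 \subseteq X_1$ of size $\geq |X_1|/((L+1)!K^L)$ sharing a common profile $\pi$ of some length $\ell \leq L$. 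To enforce index consistency I would pick $\iota: V(H)\to[L]$ uniformly at random and set
\[
X' := \{x \in X_2 : \iota(z) = \text{index of } z \text{ in } M_x \text{ for every } z \in M_x\};
\]
each $x \in X_2$ survives with probability $\ell^{-\ell} \geq L^{-L}$, so some realization of $\iota$ achieves $|X'| \geq |X_2|/L^L \geq \alpha' n$ for $\alpha' := \alpha/(2(L+1)!K^L L^L)$, and by construction $(\Znp, B, X')$ has profile $\pi$ and is index consistent. The main obstacle is the expectation estimate for $|Y'|$: one must carefully enumerate the degenerate sub-cases of the two $k$-APs, tracking which auxiliary vertices coincide with elements of $B+x$, and verify that every sub-case contributes at most $O(n^{1-1/(k-1)})$ in total via a non-trivial power of $p$ together with a bounded-in-$n$ count of $B$-induced structures.
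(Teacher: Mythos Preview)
Your proposal is correct and follows the same four-stage argument as the paper: remove a small exceptional set $Y_n$ to ensure regularity, use a linear bound on $\sum_x|M_x|$ to discard the few $x$ with $|M_x|>L$, pigeonhole onto a single profile, and enforce index consistency by a random vertex labelling. The only differences are cosmetic: for the second stage you bound $\sum_x|M_x|\le v(H)\Delta_1(H)$ by double counting and Lemma~\ref{lem:degreeH}, whereas the paper uses a direct Chebyshev estimate on the number of $(k-1)$-element subsets of $\Znp$ that extend to a $k$-AP; and the paper takes $Y_n$ to be the single set of bad elements from Fact~\ref{fact:P1} (which already forces regularity), so your separate $Y'$ is redundant---this also dissolves the minor constant overshoot caused by your quoting Lemma~\ref{lem:degreeH} with a factor $\tfrac12$ it does not actually state.
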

The proof of Lemma~\ref{lem:goodtriple} can be found in Section~\ref{sec:lem6and10}.
Lastly, we put another restriction on $Z$ as to make sure that any relatively dense subset of any core creates many 
$k$-AP's for the second round. 

\begin{lemma}
\label{lem:corefocus}
For every $c_0>0$ and $\gamma>0$ there is a $\delta>0$ such that for $p\geq c_0 n^{-1/(k-1)}$ a.a.s.\  
the following holds. The size of $\Znp$ is at most $2pn$ and for every subset $S\subset \Znp$ of size
$|S|>\gamma pn$ the set
\[F(S)=\{z\in \Z/n\Z\colon \text{there are $a_1,\dots,a_{k-1}\in S$ which form a $k$-AP with } z\} \]
has size at least $\delta n$.
\end{lemma}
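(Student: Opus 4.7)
The bound $|\Znp| \leq 2pn$ a.a.s.\ follows from a standard Chernoff inequality applied to the binomial random variable $|\Znp|$, which has mean $pn$.

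For the focus property, I would first establish the dense counterpart: for every $\gamma' > 0$ there exists $\delta' > 0$ so that every $T \subseteq \Zn$ with $|T| \geq \gamma' n$ satisfies $|F(T)| \geq \delta' n$. This follows from Varnavides' quantitative form of Szemer\'edi's theorem, which supplies $\Omega(n^2)$ $k$-APs in $T$; since each element of $\Zn$ is contained in at most $kn$ $k$-APs, a double-counting of vertex--hyperedge incidences yields a set of size $\Omega(n)$ among the elements of these $k$-APs, all contained in $F(T)$.

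To transfer this dense statement to the sparse setting, I would argue by contradiction. Suppose some $S \subseteq \Znp$ satisfies $|S| \geq \gamma p n$ but $|F(S)| < \delta n$. The key structural observation is that $S' := S \setminus F(S)$ contains no $k$-AP: any $k$-AP in $S'$ would place all of its elements inside $F(S)$ by the very definition of $F$, contradicting $S' \cap F(S) = \emptyset$. Hence $S$ decomposes as $S' \cup S''$ with $S'$ a $k$-AP-free subset of $\Zn$ and $S'' \subseteq F(S)$ of size at most $\delta n$.

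The plan is now to apply the container theorem (Theorem~\ref{thm:BMS}) to the $k$-uniform hypergraph of $k$-APs in $\Zn$, obtaining a family $\cC$ of containers, each of size at most $\alpha n$ for any prescribed $\alpha > 0$, such that every $k$-AP-free subset of $\Zn$ lies in some $C \in \cC$. A union bound over the choices of $F(S)$, the container $C$ containing $S'$, and the decomposition $S = S' \cup S''$, weighted by the probability $\Pr(S \subseteq \Znp) = p^{|S|}$, then yields an upper bound on the probability of existence of such a ``bad'' $S$. The main obstacle is to calibrate $\delta$ and $\alpha$, as functions of $c_0$, $\gamma$ and $k$, so that the $\binom{n}{\delta n}$ entropy of specifying $F(S)$ is absorbed by the factor $p^{\gamma p n}$ together with the container bound; this delicate accounting is precisely where the hypothesis $p \geq c_0 n^{-1/(k-1)}$ is needed, since below this threshold $\Znp$ does not even typically contain a single $k$-AP.
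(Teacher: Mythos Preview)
Your structural observation that $S' := S \setminus F(S)$ contains no $k$-AP is correct and neat. The union bound you propose, however, cannot close for any \emph{constant} $\delta>0$. Specifying a candidate focus set $F$ with $|F|<\delta n$ costs $\log\binom{n}{\delta n}=\Theta(n)$ bits, whereas every other quantity in your accounting lives at the much smaller scale $pn\log n=\Theta\bigl(n^{1-1/(k-1)}\log n\bigr)=o(n)$: the number of containers for the $k$-AP hypergraph is $\exp\bigl(O(pn\log n)\bigr)$, and the probabilistic weight $p^{|S|}$ with $|S|\approx\gamma pn$ contributes $\exp\bigl(-\Theta(pn\log n)\bigr)$. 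So the $\binom{n}{\delta n}$ factor dominates all savings; no calibration of the constants can absorb it. There is also a second, independent gap: your decomposition gives no lower bound on $|S'|$. Indeed $S\cap F(S)$ is exactly the set of elements of $S$ that lie in some $k$-AP fully inside $S$, so when $S$ is rich in $k$-APs (as one expects above the threshold) $S'$ may well be empty, and then the container step has nothing to work with.

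The paper takes a different and more direct route. It lower-bounds $|F(S)|$ via a Cauchy--Schwarz argument on the degree sequence $\deg(z)=\#\{(k-1)\text{-APs } Q\subseteq S: Q\cup\{z\}\text{ is a }k\text{-AP}\}$. The first moment $\sum_z\deg(z)$ is large because $S$ contains $\Omega(p^{k-1}n^2)$ arithmetic progressions of length $k-1$: for $k=3$ this is a trivial count of pairs, and for $k\ge4$ it follows from the sparse Szemer\'edi theorem (Theorem~\ref{thm:mathias}) applied with $k-1$ in place of $k$, which is legitimate since $p\ge c_0 n^{-1/(k-1)}\gg n^{-1/(k-2)}$ puts us far above the $(k-1)$-AP threshold regardless of $c_0$. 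The second moment $\sum_z\binom{\deg(z)}{2}$ is controlled a.a.s.\ by a direct variance computation in $\Znp$, and Cauchy--Schwarz then forces the support $F(S)$ of $\deg(\cdot)$ to have size $\Omega(n)$.
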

The proof of Lemma~\ref{lem:corefocus} can be found in Section~\ref{sec:corefocus}. We are now in the position to prove the main theorem.
\subsection{Proof of the main theorem}
\label{sec:proofmain}
The proof of the main theorem uses the lemmas introduced in the previous section and follows the scheme described.
\begin{proof}[Proof of Theorem~\ref{thm:main}]
For a given $k\geq 3$ assume for contradiction that $\Znp\rightarrow(k\text{-AP})$
does not have a sharp threshold. By
 Corollary~\ref{thm:BF} there exist constants $c_0$, $c_1$, $\alpha$, $\eps$, $\mu>0$ 
 and~$K$ and a function $p(n)=c(n)n^{-1/(k-1)}$ for some functin $c(n)$ satisfying
 $c_0\leq c(n)\leq c_1$  such that 
 for infinitely many $n$ there exists a subset $B\subset \Z/n\Z$ of size at most $K$
 with $B\not\rightarrow (k\text{-AP})_2$.

 We apply Lemma~\ref{lem:goodtriple} with $c_1$, $k$, $K$, and $\alpha$ to obtain $L$ and $\alpha'>0$. 
 For each $2\leq \ell\leq L$ we apply Lemma~\ref{lem:cores} with the constants $k$, $\ell$, $C_0=\alpha'/(2c_1)$, and
 $C_1=2(10k^3K)^{k-2}c_1^{(k-2)^2-1}$ to obtain $C'(\ell)$ and $\beta(\ell)>0$. Let
 \[
 	C'=\max\{C'(\ell)\colon 2\leq \ell\leq L\}
	\qand
	\beta=\min\{\beta(\ell)\colon 2\leq \ell\leq L\}
\] 
and apply Lemma~\ref{lem:corefocus} with $c_0$ and $\gamma=\beta/4$ to obtain
 $\delta>0$.
 
For each $n$ we define $\cZ_n$ to be the sets of subsets $Z\subset \Z/n\Z$ 
which satisfy the conclusions of Lemma~\ref{lem:degreeH}, Lemma~\ref{lem:goodtriple} and Lemma~\ref{lem:corefocus}
(with $\Znp$ replaced by $Z$) with the constants given and chosen from above.
As these lemmas assert properties of~$\Znp$ that hold a.a.s.\ 
we know that for sufficiently large $n$ we have $\Pr(\Znp\in\cZ_n) > 1-\mu$. Hence, 
by Corollary~\ref{thm:BF} there is an interacting triple $(Z,B,X)$ such that 
$|B|\leq K$, $|X|\geq \alpha n$ and $Z \in \cZ_n$. In particular, since $Z$ satisfies the conclusion of 
Lemma~\ref{lem:degreeH} and Lemma~\ref{lem:goodtriple}  there exists a set $Y_n$ of size at most 
$2n^{1-1/(k-1)}\log n$ and a profile $\pi$ of length $1\leq \ell\leq L$ and
a set $X'\subset X\setminus Y_n$ such that 
\begin{itemize}
\item $|X'|\geq \alpha' n$ 
\item the (interacting) triple $(Z,B,X')$ is index consistent and has profile $\pi$, 
\item the hypergraph $H=H(Z,B,X')$ satisfies  $2pn\geq v(H)=|Z|\geq pn/2$, the maximum degree of $H$ satisfies $\Delta_1(H)\leq 10k^3Kp^{k-2}n$
and $\Delta_2(H)\leq 8\log n$.
\end{itemize}
As $(Z,B,X')$ is regular we have $\ell\geq k-1\geq 2$ by definition.
Further, $H$ is an $\ell$-uniform hypergraph on $m=|Z|$ vertices 
which satisfies the assumptions of Lemma~\ref{lem:cores} with the constants chosen above.
Hence, by the conclusions of Lemma~\ref{lem:cores}
we obtain a family $\cC$ of cores, such that
\begin{enumerate}[label=\rmlabel]
\item for $t=1-1/(k-2)(\ell-1)$ we have \[|\cC|=\sum_{i=1}^{C'm^t}\binom{m}{i}\]
\item $|C|\ge \beta m$ for every $C\in \cC$, and
\item every hitting set of $H$ contains some $C$ from $\cC$.
\end{enumerate}
Let $\Phi$ be the set of all $k$-AP free colourings of $Z$. By Observation~\ref{obs:hittingset} and 
Observation~\ref{obs:hittingset2} 
we can associate to each $\phi\in\Phi$ a hitting set $A_{\phi}$ of $H$ such that any two colourings 
$\phi,\phi'\in\Phi$ agree on~$A_{\phi}\cap A_{\phi'}$, i.e.\ $\phi(z)=\phi'(z)$ for all $z\in A_{\phi}\cap A_{\phi'}$.
For any $C\in\cC$ we define $\Phi_C$ to be the set of~$\phi\in\Phi$ such that $C\subset A_{\phi}$ 
and obtain $\Phi=\bigcup_{C\in\cC}\Phi_C$.
Clearly, for any $C\in\cC$, any two~$\phi,\phi'\in\Phi_C$ agree on $C\subset A_{\phi}\cap A_{\phi'}$. Let
$B_C\subset C$ be the larger monochromatic subset of $C$ under (any)~$\phi\in \Phi_C$, say of colour blue. Then $B_C$ has size $|B_C|\geq |C|/2\geq \gamma pn$ and as
$Z\in\cZ_n$ we know by Lemma~\ref{lem:corefocus} that $|F(B_C)\setminus Z|>\delta n/2$. 
Let $\cP(C)$ denote the set of all $k$-APs contained in~$F(B_C)$.
By the quantitative version of Szemer\'edi's theorem (see \cite{Var59})
we know that there is an $\eta>0$ such that for sufficiently large $n$ we have $|\cP(C)|\geq \eta n^2$. 
Consider the second round exposure $U_C=\Znep\cap F(B_C)$ and let $t_i$ be the indicator
random variable for the event $i\in U_C$. We are interested in the probability that there is a 
$\phi\in\Phi_C$ which can be extended
to a $k$-AP free colouring of $Z\cup U_C$. To extend any colouring $\phi\in \Phi_C$ of $Z$ to a $k$-AP 
free colouring of~$Z\cup U_C$, however, it is necessary that $U_C\subset F(B_C)$ is  
completely coloured red, i.e.\ that $U_C$ does not contain any~$k$-AP. This probability can be bounded using Janson's 
inequality for  $X=\sum_{P\in \cP(C)}\prod_{i\in P}t_i$ given by
\[\pr{X=0}\leq \exp\left\{-\frac{\ex{X}^2}{2\Delta}\right\}\]
where 
\[\Delta=\sum_{A,B\in\cP(C)\colon A\cap B\neq\emptyset}\ex{\prod_{i\in A\cup B}t_i}\leq p^{2k-1}n^3+p^{k+1}k^2n^2\leq 2p^{2k-1}n^3\]
for large enough $n$. We obtain
\begin{multline*}\pr{\exists \phi\in\Phi_C\colon \phi \text{ can be extended to a $k$-AP free colouring of } Z\cup U_C}\\
\leq\pr{U_C \text{ does not contain a $k$-AP}}<\exp\{-\eta^2pn/4\}.\end{multline*}
Taking the union bound we conclude
\[\pr{(Z\cup \Znep)\not\rightarrow (k\text{-AP})_2}\leq |\cC|\exp\{-\eta^2pn/4\}\]
which goes to zero as $n$ goes to infinity. This, however, contradicts
property~\ref{item:BF2} of Corollary~\ref{thm:BF}.
\end{proof}

\section{Proofs of the Lemmas~\ref{lem:degreeH} and \ref{lem:goodtriple}}
\label{sec:lem6and10}
In this section we prove the lemmas introduced in the previous section.
We  start with some technical observations.
Given $B\subset\Z/n\Z$  and an element $z\in (\Z/n\Z)\setminus B$ let 
\[\cP(z,B)=\big\{P \subset \Z/n\Z\colon \text{ There is a } b\in B \text{ such that } P\cup\{z,b\} 
\text{ forms a } k\text{-AP}\big\}\]
and let $\cP(z,z',B)=\cP(z,B)\times\cP(z',B)$ where $z$ and $z'$ need not be distinct.
Further, let $\cP(z,B,\Z/n\Z)=\bigcup_{x\in \Z/n\Z}\cP(z,B+x)$ and in the same manner define
$\cP(z,z',B,\Z/n\Z)$.
\begin{fact}
 \label{fact:P0}
Let $z, z'\in \Z/n\Z$ and $a\in\Z/n\Z$ be given. Then  
\begin{enumerate}[label=\nlabel]
 \item the number of $P\in \cP(z,B,\Z/n\Z)$
such that $a\in P$ is at most $k^3|B|$.
 \item the number of pairs $(P,P')\in \cP(z,z',B,\Z/n\Z)$
such that $a\in P\cup P'$ is at most $2k^5|B|^2$.
\end{enumerate}
\end{fact}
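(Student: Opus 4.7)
The plan is to parameterise, for any fixed $a\in\Zn$, the tuples $(P,x)$ with $P\in\cP(z,B+x)$ and $a\in P$ by triples $(T,c,b)$, where $T$ is a $k$-AP through $z$ and $a$, $c:=b+x\in T\setminus\{z,a\}$ is the element of $T$ that is \emph{not} in $P$, and $b\in B$ determines $x=c-b$. The first step is to bound the number of $k$-APs through two fixed distinct points of $\Zn$: for each ordered assignment of positions $(i,j)\in[k]^2$ with $i\neq j$ to $z$ and $a$, the common difference $d$ solves $(j-i)d=a-z$, which in $\Zn$ has at most $\gcd(j-i,n)\leq k$ solutions, and dividing the $k(k-1)$ ordered position pairs by the two orientations per AP yields at most $k^2$ such APs (up to a mild factor of $k$ absorbed below). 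Multiplying by the $k-2<k$ choices of $c\in T\setminus\{z,a\}$ and the $|B|$ choices of $b$ then gives $k^3|B|$ such triples, which upper-bounds the count in~(1) since each $(P,x)$ is recovered as $P=T\setminus\{z,c\}$, $x=c-b$.

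For part~(2) the plan is to iterate this parameterisation simultaneously on $z$ and $z'$, coupled by a \emph{common} translate~$x$. By symmetry I may assume $a\in P$, losing a factor of~$2$; part~(1) then provides at most $k^3|B|$ triples $(P,b,x)$ and in particular pins down~$x$. For this fixed $x$ the choices of $P'\in\cP(z',B+x)$ are counted by the same scheme applied to $z'$: pick $b'\in B$ (at most $|B|$ options, determining the partner point $b'+x$), then pick a $k$-AP $T'$ through $z'$ and $b'+x$ (at most $k^2$ options by the first step), after which $P'=T'\setminus\{z',b'+x\}$ is forced. This gives at most $k^2|B|$ choices of $P'$ per fixed $x$, and altogether $2\cdot k^3|B|\cdot k^2|B|=2k^5|B|^2$, as claimed.

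The only mildly delicate step, in either part, is the count of $k$-APs through two prescribed points of $\Zn$: in the worst case the equation $(j-i)d=a-z$ admits up to $\gcd(j-i,n)\leq k-1$ solutions for $d$ instead of the single solution one would have over $\Z$. This inflation by at most a factor of~$k$ is comfortably swallowed by the constants $k^3$ and $2k^5$ in the statement, so no finer number-theoretic input is required.
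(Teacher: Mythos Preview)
Your argument is essentially the paper's own: parameterise by the positions of $z$ and $a$ in the ambient $k$-AP, then by which remaining element of that AP lies in $B+x$, then by the preimage $b\in B$ (which fixes $x$), and for part~(2) iterate the same scheme for $P'$ once $x$ is pinned down. Your acknowledgement of the $\gcd$ subtlety in $\Zn$ is a point the paper's proof glosses over (it asserts that the positions of $z$ and $a$ ``uniquely determine'' the AP); strictly speaking the extra factor of up to $k-1$ is \emph{not} absorbed by the stated constants $k^3$ and $2k^5$ as you claim, but since only bounds of the shape $O_k(|B|)$ and $O_k(|B|^2)$ are ever needed downstream, this is harmless.
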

\begin{proof}
 We only prove the second property. For that we count the number of pairs $(P,P')\in \cP(z,z',B,\Z/n\Z)$ 
 such that $a$ is in, say, $P$. Recall that there must exist $x\in\Z/n\Z$ and $b, b'\in B+x$ such that 
 $P\cup\{z, b\}$ and $P'\cup\{z',b'\}$ are both $k$-APs. 
 Choosing the positions of $z$ and $a$ uniquely determines the first $k$-AP. There are at
 most $(k-2)$ choices for $b$ to be contained in the $k$-AP and at most $|B|$ choices of 
 $x$ such that $b\in B+x$. 
 Each such choice determines $P$ and moreover, gives rise to
 at most $|B|$ choices for $b'$. Choosing the positions of $b'$ and $z'$ then determines the second $k$-AP, hence
 also $P'$.
 \end{proof}

We define
\[
 \cP_0(z,B)=\big\{P\in\cP(z,B)\colon P \text{ and } B \text{ are disjoint}\big\}\] and
\[ \cP_0(z,z',B)=\big\{(P,P')\in\cP(z,z',B)\colon P\cup\{z\}, P'\cup\{z'\} \text{ and } B \text{ are pairwise disjoint}\big\}.
\]

Further, let \[\cP_1(z,B)=\cP(z,B)\setminus \cP_0(z,B)\quad\text{ and }\quad\cP_1(z,z',B)=\cP(z,z',B)\setminus \cP_0(z,z',B).\]
For given sets  $A, B\subset \Z/n\Z$ we call $x\in \Z/n\Z$  \emph{bad} (with respect to 
$A$ and $B$) if 
\begin{enumerate}[label=\nlabel]
 \item there are  $z\in A\setminus B+x$ and  $P\in \cP_1(z,B+x)$ such that $P\cup\{z\}\subset A\cup B+x$ or
  \item there are $z,z'\in A\setminus B+x$ and $(P,P')\in \cP_1(z,z',B+x)$ such that 
  $P\cup P'\cup\{z,z'\}\subset A\cup B+x$.
\end{enumerate}

\begin{fact}
\label{fact:P1}
 Let $p\leq c_1 n^{-1/(k-1)}$ and let $B$ be a set of constant size. Let $Y_n$ be the set of bad elements
  with repect to $\Znp$ and $B$. Then a.a.s.\ $|Y_n|<p n\log n$.
 \end{fact}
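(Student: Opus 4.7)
The plan is a first-moment bound via Markov's inequality, leveraging the translation-invariance of $\Znp$. Since $\Znp-x$ has the same distribution as $\Znp$, shifting a witness of ``$x$ is bad with respect to $(\Znp,B)$'' by $-x$ produces a witness of ``$0$ is bad with respect to $(\Znp,B)$'', so $\pr{x\in Y_n}$ is independent of $x\in\Zn$. Consequently $\EE|Y_n|=n\cdot\pr{0\in Y_n}$, and Markov reduces the lemma to showing $\pr{0\in Y_n}=o(p\log n)$, which I would establish by a union bound over the two cases of the badness definition.

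For case~(1), the witness is a $k$-AP $T\subset\Zn$ with $|T\cap B|\geq 2$ and $T\setminus B\subset\Znp$. Any two elements of $B$, together with their positions in the $k$-AP, determine $T$, so the number of admissible $T$ with $j=|T\cap B|\in\{2,\ldots,k-1\}$ is $O_{k,|B|}(1)$. Each contributes probability $p^{k-j}$, and summing over $j$ gives a total of $O(p)$, dominated by $j=k-1$.

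For case~(2), one may assume case~(1) fails, so each of the two $k$-APs $T_1=\{z,b\}\cup P$ and $T_2=\{z',b'\}\cup P'$ meets $B$ in a single element. Reading $\{z,z'\}$ as a $2$-element set (so $z\ne z'$), the condition $(P,P')\in\cP_1$ then forces $T_1\cap T_2$ to contain a vertex outside $B$. Writing $s=|T_1\cap T_2|\geq 1$, at $s=1$ one counts configurations by choosing $T_1$ in $O(n)$ ways (through some $b\in B$), a shared vertex $v\in T_1\setminus B$ in $O(1)$ ways, and $T_2\ni v$ through some $b'\in B\setminus\{b\}$ in $O_{k,|B|}(1)$ ways, yielding $O(n)$ pairs, each contributing probability $p^{2k-3}$. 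For $s\geq 2$ the shared $s$-sub-AP must extend to $k$-APs through two prescribed elements of $B$, which collapses the count to $O_{k,|B|}(1)$ per value of $s$. The dominant $s=1$ term is $O(np^{2k-3})=O(n^{-(k-2)/(k-1)})=o(p)$, and larger $s$ contributes strictly less.

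Combining the two cases yields $\pr{0\in Y_n}=O(p)$, whence $\EE|Y_n|=O(pn)=o(pn\log n)$, and Markov's inequality concludes. The main technical obstacle I anticipate is the case-(2) count: one must verify, for every overlap parameter $s\in\{1,\ldots,k-1\}$, that the product of the configuration count and the probability factor $p^{2(k-1)-s}$ is $o(p)$, which rests on the constraint that each $T_i$ meets $B$ in exactly one point combined with the hypothesis $p\leq c_1 n^{-1/(k-1)}$ matching the density of $k$-APs.
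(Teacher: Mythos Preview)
Your first-moment-plus-Markov strategy is the same as the paper's, and the reduction to $x=0$ via translation invariance is a clean simplification the paper does not make explicit. Your decomposition of case~(2) differs: rather than splitting according to which of $P,P'$ meets $B$ (as the paper does), you first absorb the sub-cases where $P$ or $P'$ meets $B$ back into case~(1), and then stratify what remains by the overlap $s=|T_1\cap T_2|$. Both decompositions lead to the same $O(p)$ bound on $\pr{0\in Y_n}$, so this is a genuine but minor variation rather than a different proof.

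Two points need tightening. First, the $s=1$ contribution $O(np^{2k-3})$ is only $O(p)$, not $o(p)$, when $k=3$ (since then $np^{3}\asymp p$); but $O(p)$ is all you need for $\EE|Y_n|=O(pn)=o(pn\log n)$. Second, and more substantively, your claimed $O_{k,|B|}(1)$ count for $s\ge 2$ is correct when $b\neq b'$ (two shared points together with two \emph{distinct} anchors in $B$ overdetermine the pair of progressions via two linear equations in the two common differences), but you have not handled $b=b'$. In that sub-case $b\in T_1\cap T_2$ automatically, so $s\ge 2$ is forced, yet the pair count is $\Theta(n)$ rather than $O(1)$. This is harmless: with only one element of $B$ to remove, the probability factor becomes $p^{2k-s-1}$, and $n\cdot p^{2k-s-1}\le np^{k}=O(p)$ for every $2\le s\le k-1$. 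You should also justify why the degenerate possibility $T_1=T_2$ (formally $s=k$) may be excluded from the count; the paper's own proof is equally informal on this last point.
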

\begin{proof}
 We will show that the expected size of $Y_n$ is of order $pn$ so that the statement 
 follows from Markov's inequality.

For a fixed $x$ we first deal with the case that $x$ is  bad due to the first property, i.e.\ there
is a $k$-AP $P\cup\{z,b\}$ with $b\in B+x$, $P$ intersecting $B+x$ and $z\in\Znp$ which does not belong to $B+x$.
Note that after choosing $b$, one common element of $P$ and $B+x$ and their positions the $k$-AP is uniquely 
determined. Then there are at most $(k-2)$ choices for $z$ each of which uniquely determines one $P$. 
Hence the probability that $x$ is bad due to the first property is at most $|B|k^3p$ and summing over all $x$
we conclude that the expected number of bad elements due to the first property is at most $|B|k^3pn$.

If $x$ is bad due to the second property
then there are two $k$-APs $P\cup\{z, b\}$ and $P'\cup\{z', b'\}$ such that
 two of the three sets
 $P \cup\{z\},P'\cup\{z'\}, B+x$ intersect  and $P\cup P'\cup\{z,z'\}\subset \Znp\cup B+x$
 where $z$ and $z'$ are not in $B+x$.
 
We distinguish two cases and first consider all tuples $(P,P',z,z',b,b')$ with the above mentioned 
properties such that $P$ (or $P'$ respectively)  does not intersect $B+x$. 
Note that with this additional property the probability that $x$ is bad due to $(P,P',z,z',b,b')$ is at most 
$p^{k}$ since $z,z'$ and $P$ all need to be in $\Znp$.
First, we count the number of such tuples with the additional property that $P'$ (or $P$ respectively) also has empty 
intersection with $B+x$. This implies that
$P$ and $P'$ must intersect and
in this case, choosing $b,b'\in B+x$ and one common element $a\in P\cap P'$ and the positions of $b,b',a$ in the 
$k$-AP's uniquely determines both $k$-APs.
After these choices there are at most $k^2$ choices for $z,z'$.
Hence, there are at most $|B|^2k^5n$ such tuples for a fixed $x$. 

Next,  we count the number of tuples $(P,P',z,z',b,b')$ with the property that $P'$ and $B+x$ intersect. 
In this case choosing $b'\in B+x$ and one element 
in $P'\cap B+x$ and their positions in the $k$-AP uniquely determines the second $k$-AP. 
Choosing $b\in B+x$, another element $a\in\Z/n\Z$, and their positions
determines the first $k$-AP. After these choices there are at most $k^2$ choices for $z,z'$ hence in total
there are at most $|B|^3k^6n$ such tuples for a fixed $x$.
We conclude that the expected number of bad $x$ due to tuples $(P,P',z,z',b,b')$ such that $P$ (or $P'$ respectively) 
does not intersect $B+x$ is at most $p^k2n(|B|^2k^5n+|B|^3k^6n)$.

It is left to consider the tuples $(P,P',z,z',b,b')$ such that $(P,P')\in \cP_1(z,z',B+x)$ and~$P$ and~$P'$ both intersect $B+x$. In this case
choosing $b, b'$, the element(s) in $P\cap B+x$ and $P'\cap B+x$ and their positions uniquely
determine the two $k$-APs.
Since $z,z'\in \Znp$ with probability $p^2$ the expected 
number of bad $x$ due to tuples $(P,P', z,z',b,b')$ with the above mentioned property is at most $|B|^4k^6p^2n$. 

Hence the expected size of $Y_n$ is 
\[
	|Y_n|<|B|k^3pn+2p^kn(|B|^2k^5n+|B|^3k^6n)+|B|^4k^6p^2n<6|B|^3k^6p^kn^2\,,
\] 
as claimed.
\end{proof}

\begin{fact}
\label{fact:matchings}
 Let $\ell\geq 1$ be an integer and let $F$ be an $\ell$-uniform hypergraph on the vertex set $\Z/n\Z$ which has maximum vertex degree at most
 $D$. Let $U=\Znp$ with $p=cn^{-1/(k-1)}$. Then
 with probability at most $2Dn^{-4}$ we have $e(F[U])> 5D(p^\ell n/\ell+\log n)$.
\end{fact}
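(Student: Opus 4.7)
The plan is to partition the edges of $F$ into a small number of matchings, bound the fluctuation of $e(F[U])$ within each matching using Chernoff's inequality, and then conclude via a union bound over the matchings.

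Since $F$ is $\ell$-uniform with $\Delta_1(F)\le D$, its line graph (whose vertices are the edges of $F$, two being adjacent whenever they share a vertex) has maximum degree at most $\ell(D-1)$. A greedy edge-colouring therefore partitions $E(F)$ into $q\le \ell(D-1)+1 \le \ell D$ matchings $M_1,\dots,M_q$. The edges within any single matching $M_j$ are vertex-disjoint, so
\[
Y_j \;:=\; e(M_j[U]) \;=\; \sum_{e\in M_j}\mathds{1}[e\subseteq U]
\]
is a sum of $|M_j|$ independent $\mathrm{Ber}(p^\ell)$ indicators, hence $Y_j\sim\mathrm{Bi}(|M_j|,p^\ell)$ with mean $\mu_j=|M_j|p^\ell$. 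The max-degree bound gives $e(F)\le Dn/\ell$, so $\sum_j \mu_j = p^\ell e(F)\le p^\ell Dn/\ell$.

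I then apply Chernoff's inequality to each $Y_j$ with the per-matching threshold $x_j := 5\mu_j + 5D\log n/q$, chosen precisely so that $\sum_j x_j \le 5\sum_j \mu_j+5D\log n \le 5D(p^\ell n/\ell+\log n)$. A case split on whether $\mu_j$ or the additive buffer $5D\log n/q\ge (5\log n)/\ell$ dominates — using the multiplicative form $\pr{X\ge (1+\delta)\mu}\le \exp(-\delta\mu/3)$ for $\delta\ge 1$ in the former case and the moment form $\pr{X\ge a}\le (e\mu/a)^a$ in the latter — yields $\pr{Y_j > x_j}\le n^{-5}$ for each $j$. A union bound over the $q\le \ell D$ matchings then gives
\[
\pr{e(F[U])>5D(p^\ell n/\ell+\log n)} \;\le\; q\cdot n^{-5} \;\le\; \ell D\cdot n^{-5} \;\le\; 2Dn^{-4}
\]
for $n$ sufficiently large, which is the desired conclusion.

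The only delicate point is that the per-matching buffer $5D\log n/q$ can be as small as $(5\log n)/\ell$, which is just enough for the small-$\mu_j$ regime of Chernoff to yield an $n^{-5}$ tail; balancing the two regimes of Chernoff so that both produce this uniform bound is the main (routine) technical step. Apart from this bookkeeping, no substantive obstacle is expected, since $F$ enters the argument only through its maximum vertex degree $D$, which simultaneously controls both the number of matchings in the decomposition and the total mean $\sum_j \mu_j$.
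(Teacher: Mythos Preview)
Your approach---decompose $E(F)$ into matchings, apply Chernoff to each, and take a union bound---is exactly the paper's. The one structural difference is that the paper uses a single \emph{uniform} threshold $s=4\max\{p^\ell n/\ell,\log n\}$ for every matching (so that the total is at most $i_0\cdot s$), whereas you use per-matching thresholds $x_j=5\mu_j+5D\log n/q$ and sum them.

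This difference is not cosmetic; your version has a real gap. By spreading the additive $\log n$ buffer evenly across the $q$ matchings, each matching receives only $b:=5D\log n/q\ge 5\log n/\ell$, and any Chernoff-type bound on $\pr{Y_j>x_j}$ is then at best $\exp(-\Theta(b))=n^{-\Theta(1/\ell)}$. Concretely, with $t=x_j-\mu_j=4\mu_j+b$ the bound $\exp\bigl(-t^2/(2(\mu_j+t/3))\bigr)$ is weakest near $\mu_j=0$, giving only $\exp(-3b/2)\le n^{-15/(2\ell)}$; the moment form $(e\mu_j/x_j)^{x_j}$ does somewhat better but is still $n^{-O(1/\ell)}$ at its worst $\mu_j$. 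So your ``just enough'' claim fails already for $\ell=2$, and for $\ell\ge 4$ the union bound over $q\le\ell D$ matchings no longer delivers $2Dn^{-4}$. The paper's uniform threshold sidesteps this entirely: every matching keeps the full $\Theta(\log n)$ buffer, so the per-matching tail is $n^{-\Omega(1)}$ with the implied constant independent of $\ell$, and the factor $i_0$ from summing identical thresholds is absorbed into the leading $5D$. (Your observation that greedy colouring in general only guarantees $q\le\ell(D-1)+1$ rather than the paper's stated $i_0\le D$ is fair, but that is a constant-factor issue, not a structural one.)
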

\begin{proof}
 For $\ell=1$ the bound directly follows from Chernoff's bound
 \begin{align}\label{eq:chernoff}\pr{|X-\ex{X}|> t}\leq 2\exp\left\{-\frac{t^2}{2(\ex{X}+t/3)}\right\}\end{align}
 for a binomial distributed random variable $X$. For $\ell>1$
 we split the edges of $F$ into $i_0\leq D$ matchings $M_1,\dots,M_{i_0}$, 
 each of size at most $n/\ell$.
 For an edge $e\in E(F)$ let $t_e$ denote the random variable indicating that $e\in E(F[U])$.
 Then  $\pr{t_e=1}=p^\ell$ and we set $s=4\max\{p^\ell n/\ell,\log n\}$. By Chernoff's bound
 we have
\[\pr{\exists i\leq i_0\colon \sum_{e\in M_i}t_e>s}\leq 2Dn^{-4}.\]
This finishes the proof since $e(F[U])=\sum_{i\in[i_0]}\sum_{e\in M_i}t_e$ which exceeds $Ds$	 with 
probability at most $2Dn^{-4}.$
\end{proof}
\subsection*{Proof of Lemma~\ref{lem:degreeH} and Lemma~\ref{lem:goodtriple}}
Based on the preparation from above  we give the proof of  Lemma~\ref{lem:degreeH} and Lemma~\ref{lem:goodtriple} in this 
section.
\begin{proof}[Proof of Lemma~\ref{lem:degreeH}]
Let $t_i$ be the indicator random variable for the event $i\in \Znp$. 
Consequently, $v(H)=\sum_{i\in\Z/n\Z}t_i$ is binomially distributed and the 
first property directly follows from Chernoff's bound \eqref{eq:chernoff}.

For the second and third properties we first consider all elements from $\Z/n\Z$ which are bad with respect
to $\Znp$ and $B$. By Fact~\ref{fact:P1} we know that a.a.s.\ the set $Y_n$ of bad elements has size at most 
$n^{1-1/(k-1)}\log n$ and
in the following we will condition on this event. 

We consider the degree and co-degree in the hypergraph $H(\Znp,B,(\Z/n\Z)\setminus Y_n)$.
Let $z, z'\in\Z/n\Z$ be given.  
If $z$ is contained in an edge 
$M_x=M(\Znp,B+x)$ then there is an element $P\in\cP(z,B+x)$ 
such that $P\subset \Znp\cup B+x$. It is sufficient to focus on those $P\in\cP_0(z,B+x)$ since after removing $Y_n$  
those $P\in\cP_1(z,B+x)$ will have no contribution. Hence we can assume
$P\subset \Znp$ or equivalently $\prod_{i\in P}t_i=1$.
Further, there are at most three different values of $y$ such that $P$ is contained 
in $\cP_0(z,B+y)$.
Hence, letting $\cP_0(z,B,\Z/n\Z)=\bigcup_{x\in\Z/n\Z}\cP_0(z,B+x)$ we can bound the degree of $z$ by 
\begin{align}\label{eq:deg}\deg(z)\leq 3\sum_{P\in\cP_0(z,B,\Z/n\Z)}\prod_{i\in P}t_i.\end{align}
 Similarly, if $z,z'$ are contained in an edge $M_x$ then there is a pair $(P,P')\in \cP_0(z,z',B+x)$ such that
 $P\cup P'\subset \Znp\cup B+x$, i.e.\ $\prod_{i\in P\cup P'}t_i=1$. We obtain
\begin{align}\label{eq:codeg}\codeg(z_1,z_2)\leq 9\sum_{(P,P')\in \cP_0(z,z',B,\Z/n\Z)}\prod_{i\in P\cup P'}t_i\end{align}

We consider $\cP_0(z,B,\Z/n\Z)$ (respectively, $\cP_0(z,z',B,\Z/n\Z)$) as a $(k-2)$-uniform (resp., $(2k-4)$-uniform) 
hypergraph on the vertex set $\Z/n\Z$. By Fact~\ref{fact:P0} we know that the maximum degree of the 
hypergraph is at most $k^3K$ (resp.\ $5k^5K^2$). By Fact~\ref{fact:matchings} the probability that 
$\deg(z)>10k^3Kp^{k-2}n$ or $\codeg(z,z')>8\log n$ is at most $2k^3Kn^{-4}$. Taking the union bound over all 
elements and all pairs of $\Z/n\Z$ we obtain the desired property.
\end{proof}

\begin{proof}[Proof of Lemma~\ref{lem:goodtriple}]
 For given $k, c, K$ and $\alpha$ set $L=20c^{k-1}k^2/\alpha$ and $\alpha'=\alpha/2(LK)^L$
 and let some linear order on $\Zn$ be given.
 First we choose $Y_n$ so as to guarantee regularity of the triple $(\Znp,B,(\Z/n\Z)\setminus Y_n)$. Note that there 
 are two sources of irregularity: $k$-APs containing
 two elements from $B+x$ for some $x\in \Z/n\Z$ and pairs of $k$-APs with one common element in $\Znp$ and 
 each containing one element in $B+x$ for some $x\in \Z/n\Z$. These are ruled out by removing all $x$ which are 
 bad with respect to $\Znp$ and $B$. By Fact~\ref{fact:P1} a.a.s.\ the set $Y_n$ of bad elements has 
 size at most $n^{1-1/(k-1)}\log n$.

 Further, we count the number of $(k-1)$-element sets in $\Znp$ 
 which arise from $k$-APs with one element removed. The expected number of such sets is at most
 $p^{k-1}kn^2$ and the variance is of order at most $p^{k-2}n^2$. Hence, by Chebyshev's inequality 
 the number of such sets in $\Znp$ is at most $2c^{k-1}kn$ asymptotically almost surely. 
 
Consider any set $A\subset \Z/n\Z$ which posseses the two properties mentioned above: there is a set $Y_n$
of size at most $n^{1-1/(k-1)}\log n$ such that $(A,B,(\Z/n\Z)\setminus Y_n)$ is regular and the number of
$(k-1)$-element sets in $A$ which arise from $k$-APs with one element removed is at most~$2c^{k-1}kn$. 
Let $X\subset\Z/n\Z$ of size $|X|\geq \alpha n$ be given. 
For every $x\in X\setminus Y_n$ let $\ell_x$ denote 
the size of $M_x=M(A,B,X)$. Then there are at least $\ell_x/(k-1)$
sets of size $(k-1)$ contained in $M_x$ each forming a 
$k$-AP with an element in $B+x$. Further, each such set is contained in~$B+x'$ for at most three $x'\in \Z/n\Z$,
hence, $\sum_{x\in X\setminus Y_n}\ell_x/(k-1)\leq 6c^{k-1}kn$ and we conclude that the number of 
$x\in X\setminus Y_n$ such that  
$\ell_x>20c^{k-1}k^2/\alpha=L$ is at most $\alpha n/3$.
Moreover, there are at most $|B|^\ell$ distinct profiles of length $\ell$, hence there is a profile $\pi$ of 
length $\ell\leq L$ and a 
set $U\subset X\setminus Y_n$ of size $|U|\geq \alpha n/2K^{L}$ such that $(A,B,U)$ has profile $\pi$.

To obtain a set $X'\subset U$ such that $(A,B,X')$ is index consistent we consider a random partition of 
$A$ into $\ell$ classes $(V_1,\dots,V_{\ell})$.
We say that an edge $M_x\in H(A,B,U)$ with the elements $z_1<\dots<z_{\ell}$ survives if $z_i\in V_i$ for all 
$i\in[\ell]$. The probability of survival is  
$\ell^\ell$, hence there is a partition such that at least $|U|/\ell^{\ell}$ edges survives. Choosing the corresponding 
set $X'\subset U$ yields a set with the desired properties.  
\end{proof}

\section{Proof of Lemma~\ref{lem:cores}}\label{sec:cores}
In this section we prove Lemma~\ref{lem:cores}. The proof relies crucially on 
a structural theorem of Balogh, Morris and Samotij~\cite{BMS12} which we state in the following.
Let $H$ be a uniform hypergraph with vertex set $V$ and let $\cF$
be an increasing family of subsets of $V$ and $\eps\in(0,1]$. The 
hypergraph $H$ is called $(\cF,\eps)$-dense if for every $A\in \cF$ 
\[
e(H[A])\ge \eps e(H).
\]
Further, let $\cI(H)$ denote the set of all independent sets of $H$.
The so-called \emph{container theorem} by Balogh, Morris and Samotij~\cite{BMS12}*{Theorem~2.2} reads as follows.
\begin{theorem}[Container theorem]\label{thm:BMS}
For every $\ell\in\Nat$ and all positive $c$ and $\eps$, there exists a positive 
constant $c'$ such that the following holds. Let $H$ be an $\ell$-uniform hypergraph and let~$\cF$ be an increasing family of subsets of $V$ such that $|A|\ge \eps v(H)$
 for all $A\in\cF$. Suppose that $H$ is $(\cF,\eps)$-dense and $p\in(0,1)$ is such that 
 for every $t\in[\ell]$ the maximum $t$-degree $\Delta_{t}(H)$ of $H$ satisfies
 \[
 \Delta_{t}(H)
 =
 \max_{\subalign{T&\subseteq V\\ |T|&=t}}\big\{|\{e\supseteq T\colon e\in E(H)\}|\big\}
 \le c p^{t-1}\frac{e(H)}{v(H)}\,. \]
 Then there is a family $\cS\subseteq \binom{V(H)}{\le c'pv(H)}$ and 
 functions $f\colon \cS\to\overline{\cF}$ and $g\colon\cI(H)\to \cS$ such that for every $I\in \cI(H)$,
 \[
 g(I)\subseteq I \qand I\setminus g(I)\subseteq f(g(I)).
 \]
\end{theorem}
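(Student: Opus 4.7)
The plan is to derive Lemma~\ref{lem:cores} as an essentially direct corollary of the container theorem (Theorem~\ref{thm:BMS}), exploiting the self-duality that a set $S \subseteq V(H)$ is a hitting set of $H$ if and only if $V(H) \setminus S$ is an independent set of $H$. Consequently, to build a family $\cC$ of large cores such that every hitting set contains one, it suffices to produce a small family of \emph{containers} $\cT$, each of size at most $(1-\beta)m$, such that every independent set of $H$ lies inside some $T \in \cT$; the cores are then the complements $\cC = \{V(H) \setminus T : T \in \cT\}$.

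To fit the hypotheses of Theorem~\ref{thm:BMS} I would set $m := v(H)$, $\eps := 1/2$, and $\beta_0 := C_0/(4C_1)$, and define the increasing family $\cF := \{A \subseteq V(H) : |A| \geq (1-\beta_0)m\}$. The $(\cF,\eps)$-density condition is immediate from the bound on $\Delta_1(H)$: removing any set of at most $\beta_0 m$ vertices destroys at most $\beta_0 C_1 m^{1+1/(k-2)} = e(H)/4$ edges, so $e(H[A]) \geq (3/4)e(H) \geq \eps\,e(H)$ for every $A \in \cF$. For the degree hypothesis I would choose $p := D m^{-1/((k-2)(\ell-1))}(\log m)^{1/(\ell-1)}$ with $D = D(c, C_0, C_1, \ell)$ large enough and $c \geq C_1/C_0$; the $t=1$ case is handled by the hypothesis on $\Delta_1$, and for $2 \leq t \leq \ell$ the binding case is $t = \ell$, where $\Delta_\ell \leq \Delta_2 \leq C_1 \log m$ forces $p^{\ell-1} \gtrsim \log m/m^{1/(k-2)}$, exactly matched by this choice of $p$.

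Applying Theorem~\ref{thm:BMS} then yields a family $\cS \subseteq \binom{V(H)}{\leq c'pm}$ and functions $f \colon \cS \to \overline{\cF}$ and $g \colon \cI(H) \to \cS$ with $I \subseteq g(I) \cup f(g(I))$ for every $I \in \cI(H)$. Setting $T_s := s \cup f(s)$ gives $|T_s| \leq c'pm + (1-\beta_0)m \leq (1-\beta_0/2)m$ for all sufficiently large $m$, since $pm = o(m)$; I then take $\cC := \{V(H) \setminus T_s : s \in \cS\}$ and $\beta := \beta_0/2$. From this one directly reads off the conclusions: property~\ref{it:cores2} is $|V(H) \setminus T_s| \geq \beta m$; property~\ref{it:cores3} holds because for any hitting set $S$ the complement $I := V(H) \setminus S$ is independent, so $V(H) \setminus T_{g(I)} \subseteq V(H) \setminus I = S$ and the left-hand side lies in $\cC$; and property~\ref{it:cores1} follows from $|\cC| \leq |\cS| \leq \sum_{i \leq c'pm} \binom{m}{i}$.

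The one delicate point, and the main obstacle to matching the statement verbatim, is that the stated exponent in property~\ref{it:cores1} is $t = 1 - 1/((k-2)(\ell-1))$, whereas my $p$ forces $c'pm$ to be of order $m^t(\log m)^{1/(\ell-1)}$, an extra polylogarithmic factor that seems unavoidable from Theorem~\ref{thm:BMS} given only a $\log m$ bound on $\Delta_2$. In the subsequent use of the lemma this slack is harmless: $\sum_{i \leq C'm^t \log m}\binom{m}{i} = \exp(o(m)) = \exp(o(pn))$ is still comfortably dominated by the $\exp(-\Omega(pn))$ gain from Janson's inequality in the proof of Theorem~\ref{thm:main}, so~\ref{it:cores1} can be used as written after either absorbing the $\log m$ factor into a marginally larger exponent or reading $C'm^t$ as $C'm^t\log m$. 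The remaining work is just the consistent bookkeeping of $\beta_0$, $\eps$, $c$, and $D$ across the density and degree requirements.
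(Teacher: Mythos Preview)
First, a bookkeeping point: the displayed statement is Theorem~\ref{thm:BMS}, the container theorem of Balogh--Morris--Samotij, which the paper quotes from~\cite{BMS12} and does not prove. What you have actually written is a proof of Lemma~\ref{lem:cores}, so I compare it with the paper's proof of that lemma.

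Your argument follows the same route as the paper's: both apply Theorem~\ref{thm:BMS} to $H$ with $\eps=1/2$, form the containers $s\cup f(s)$ for $s\in\cS$, and take $\cC$ to be the family of their complements; properties~\ref{it:cores2} and~\ref{it:cores3} then drop out exactly as you describe. The paper chooses $\cF$ slightly differently (it builds the edge-count condition into the definition of $\cF$ rather than deducing it from a vertex-count condition via $\Delta_1$), but this is cosmetic.

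The one substantive discrepancy is your ``delicate point'' about an unavoidable $(\log m)^{1/(\ell-1)}$ factor, and here you have simply overlooked a trivial observation: in a simple $\ell$-uniform hypergraph one always has $\Delta_\ell(H)\le 1$, since an $\ell$-set of vertices is contained in at most one edge. Thus bounding $\Delta_\ell$ by $\Delta_2$ is wasteful. With $p=m^{-1/((k-2)(\ell-1))}$ and $c=\max\{1,C_1/C_0,1/C_0\}$ one checks directly that for $t=1$ the bound $\Delta_1\le C_1 m^{1/(k-2)}\le c\,e(H)/v(H)$ holds; for $2\le t\le \ell-1$,
\[
c\,p^{t-1}\frac{e(H)}{v(H)}\ \ge\ c\,p^{\ell-2}\cdot C_0\,m^{1/(k-2)}\ =\ cC_0\,m^{1/((k-2)(\ell-1))}\ \gg\ C_1\log m\ \ge\ \Delta_t(H)\,;
\]
and for $t=\ell$ the requirement is just $1\le c\,p^{\ell-1}C_0 m^{1/(k-2)}=cC_0$. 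So the paper's clean choice of $p$ satisfies all the degree hypotheses, and property~\ref{it:cores1} comes out exactly as stated, with no logarithm. Your proof is otherwise correct; removing the spurious $\log m$ is the only repair needed.
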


Theorem~\ref{thm:BMS} roughly says that if an uniform hypergraph $H$ satisfies certain 
conditions then the set of independent sets $\cI(H)$ of $H$  
can be ``captured'' by a family $\cS$ consisting of small sets. Indeed, 
every independent set $I\in \cI(H)$ contains 
a (small) set $g(I)\in\cS$ and the remaining elements of $I$ must come from a set determined by $g(I)$.

We are now in a position to derive Lemma~\ref{lem:cores} from Theorem~\ref{thm:BMS}.

\begin{proof}[Proof of Lemma~\ref{lem:cores}]
Given the constants $k, \ell, C_0$ and $C_1$ we apply Theorem~\ref{thm:BMS} with $\eps=1/2$, and 
$c=\max\{1,C_1/C_0\}$ to obtain $c'$. We let $C'=c'$ and $\beta=\min\{1/4,C_0/(4C_1)\}>0$.

We define the increasing family $\cF$ by
\[
\cF=\{A\colon A\subset V(H), \text{ s.t. }|A|\ge m/2 \text{ and } e(H[A])\ge e(H)/2\}.
\]
Clearly, $H$ is $(\cF,1/2)$-dense and, moreover,  any set $A\in\ocF$  has size less 
than $m/2$ or we have $e(H[A])<e(H)/2$. Thus, at least $e(H)/2$ edges 
of $H$ are incident to the vertices outside 
of~$A$. Therefore, $\Delta_1(H)(m-|A|)\ge e(H)/2\geq C_0 m^{1+1/(k-2)}/2$ and with 
 $\Delta_1(H)\le C_1 m^{1/(k-2)}$ we conclude: 
 \[
    |A|\le (1-2\beta)m.
 \]

We define $p=m^{-1/(k-2)(\ell-1)}$ so that 
$\Delta_t(H)\leq cp^{t-1}\frac{e(H)}{v(H)}$ for all $t\in[\ell]$. 
In fact, for $t=1$ this follows directly from the bound $\Delta_1(H)\leq C_1m^\frac{1}{k-2}$ given by  
the assumption of Lemma~\ref{lem:cores}. For $t=2,\dots,\l-1$ we use $\Delta_t(H)\leq \Delta_2(H)$
and on $\Delta_2(H)$ given by the assumption of Lemma~\ref{lem:cores}. Finally, for $t=\l$ we note that 
$\Delta_\l(H)=1$ and the desired bound follows again from the choices of~$p$ and~$c$.

Thus there exist a family $\cS$ 
 and functions  $f\colon \cS\to \ocF$ and $g\colon \cI(H)\to\cS$ 
 with the properties described in Theorem~\ref{thm:BMS}.
We define
\[
   \cA=\{{S\cup f(S)}\colon S\in\im(g)\},
\]
where $\im(g)$ is the image of $g$.
Our cores will be the complements of the elements of $\cA$, 
\[
\cC = \{V(H) \setminus {A} \colon A \in \cA\}.
\]
Since $|\cC|=|\cA|\leq|\cS|$, we infer \ref{it:cores1} of Lemma~\ref{lem:cores}. Further, 
every $A\in\cA$ has size at most 
\[
	(1-2\beta)m+C'pm\le (1-\beta)m
\] 
which yields the  property \ref{it:cores2} of Lemma~\ref{lem:cores}.

Finally, by the properties of the functions $f$ and $g$, every independent set
$I$ is contained in~$A=g(I)\cup f(g(I))$, so, by taking complements, every hitting set contains an element of $\cC$
which completes the property \ref{it:cores3} of Lemma~\ref{lem:cores}.
\end{proof}

\section{Proof of Lemma~\ref{lem:corefocus}}
\label{sec:corefocus}
In this section we prove Lemma~\ref{lem:corefocus} which relies on the following result by the last author~\cite{mathias}
(see also~\cites{CG11,BMS12,ST12}).
\begin{theorem}
\label{thm:mathias}
 For every integer $k\geq 3$ and every $\gamma\in(0,1)$ there exists  $C$ and  $\xi>0$ such that for every sequence 
 $p=p_n\geq Cn^{-1/(k-1)}$ the following holds a.a.s.\:
 Every subset of $\Znp$ of size at least $\gamma pn$ contains at least $\xi p^kn^2$ arithmetic progression
 of length $k$.
\end{theorem}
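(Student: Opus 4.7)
My plan is to deduce this sparse counting statement from Szemer\'edi's theorem via the hypergraph container method (Theorem~\ref{thm:BMS}), following the common strategy underlying the references cited at the theorem, namely~\cite{mathias} and~\cites{CG11,BMS12,ST12}.

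Let $H_n$ denote the $k$-uniform hypergraph on vertex set $\Zn$ whose edges are the $k$-term arithmetic progressions in $\Zn$. A short calculation gives $e(H_n)=\Theta(n^2)$, $\Delta_1(H_n)=\Theta(n)$ (summing over common differences and positions), and $\Delta_t(H_n)=O(1)$ for $2\leq t\leq k-1$. In the notation of Theorem~\ref{thm:BMS}, these degree bounds are compatible with the choice of auxiliary parameter $p_0=\Theta(n^{-1/(k-1)})$, so each container in the resulting family is encoded by a subset of size $O(n^{1-1/(k-1)})$. The next ingredient is a Varnavides-type dense supersaturation: an averaging argument applied to Szemer\'edi's theorem yields, for every $\eta>0$, a constant $\mu=\mu(k,\eta)>0$ such that every subset of $\Zn$ of size at least $\eta n$ spans at least $\mu n^2$ edges of $H_n$. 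Taking $\cF=\{A\subseteq \Zn\colon e(H_n[A])\geq \mu n^2/2\}$ as the increasing family, $H_n$ is $(\cF,\mu/2)$-dense. Applying Theorem~\ref{thm:BMS} (and iterating within each container to drive sizes down to $\gamma n/4$) produces a family $\cC$ of subsets of $\Zn$ such that every subset with fewer than $\mu n^2/2$ APs is contained in some $C\in\cC$, and $|C|\leq \gamma n/4$ for each $C\in\cC$.

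To conclude, suppose for contradiction that some $S\subseteq \Znp$ has $|S|\geq \gamma pn$ but fewer than $\xi p^k n^2$ APs. A single-step deletion argument---remove one vertex from each AP of $S$---yields a $k$-AP-free subset $S'\subseteq S$ of size $|S'|\geq (\gamma/2)pn$, provided $\xi=\xi(\gamma,k)$ is chosen small enough. Hence $S'\subseteq C\cap \Znp$ for some $C\in \cC$ with $|C|\leq \gamma n/4$, which forces $|C\cap \Znp|\geq (\gamma/2)pn$. Since the expected size of $C\cap\Znp$ is at most $\gamma pn/4$, Chernoff's inequality controls this event by $\exp(-\Omega(pn))$, and a union bound produces
\[
    \pr{\exists\,\text{bad}\,S}\;\leq\;|\cC|\cdot \exp(-\Omega(pn)).
\]

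The main obstacle is the logarithmic overhead in $|\cC|$: a direct application of Theorem~\ref{thm:BMS} gives only $|\cC|\leq 2^{O(n^{1-1/(k-1)}\log n)}$, and the $\log n$ factor is not immediately neutralised by Chernoff, since $pn=\Theta(n^{1-1/(k-1)})$ in the whole regime $p=\Theta(n^{-1/(k-1)})$ of interest. Closing this gap is the technical heart of the argument; it can be handled by a careful iterative container construction as in~\cites{BMS12,ST12}, or equivalently by the original transference-style proof in~\cite{mathias}, which exposes $\Znp$ in two independent rounds and uses Janson's upper tail inequality in the second round to bypass the explicit enumeration of $k$-AP-free sets.
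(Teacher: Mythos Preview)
The paper does not give its own proof of Theorem~\ref{thm:mathias}; it is quoted as a known result of Schacht~\cite{mathias} (with parallel proofs in~\cites{CG11,BMS12,ST12}) and used as a black box in the proof of Lemma~\ref{lem:corefocus}. So there is nothing in the paper to compare your argument against.

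That said, your container sketch is the right shape, and the point at which you stop is not in fact ``the technical heart of the argument''. The $\log n$ overhead in $|\cC|$ disappears once you use the one feature of Theorem~\ref{thm:BMS} that your union bound ignores: the fingerprint $g(I)$ is a subset of $I$, hence of $\Znp$. So rather than bounding $\Pr(\text{bad})$ by $|\cC|\cdot\exp(-\Omega(pn))$, you bound it by
\[
\sum_{|T|\leq c'p_0 n}\Pr\big(T\subseteq\Znp\big)\cdot\Pr\big(|f(T)\cap\Znp|\geq \tfrac{\gamma}{4}pn\ \big|\ T\subseteq\Znp\big)
\ \leq\ \exp(-\delta pn)\sum_{i\leq c'p_0 n}\binom{n}{i}p^{i},
\]
where $p_0=n^{-1/(k-1)}$ and $\delta=\delta(\gamma)>0$ comes from Chernoff. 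Writing $r=p/p_0\geq C$, the sum is at most $n\cdot(er/c')^{c'p_0 n}$, whose logarithm is $c'p_0 n\log(er/c')$, while $\delta pn=\delta r p_0 n$. Since $r\mapsto \delta r - c'\log(er/c')$ tends to infinity, choosing $C=C(\gamma,k)$ large enough makes the whole expression $o(1)$. This is exactly how~\cite{BMS12} deduces such transference statements; no second round, no Janson upper tail, and no further iteration are needed beyond what you already set up.
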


With this result at hand we now prove Lemma~\ref{lem:corefocus}.
\begin{proof}[Proof of Lemma~\ref{lem:corefocus}]
The upper bound on the size of $\Znp$ follows from Chernoff's bound \eqref{eq:chernoff}.
For the second property let
 $k\geq 3$ and $\gamma$ be given. For $k\geq 4$ we apply Theorem~\ref{thm:mathias} with $k-1$ and $\gamma$ 
to obtain $C$ and $\xi$. We may assume that $\xi\leq \gamma^2/4$ and we note that in the case $k\geq4$ 
we have $p\geq c_0n^{-1/(k-1)}>Cn^{-1/(k-2)}$ for sufficiently large n.
We choose $\delta=\xi^2/20$.

Let $S\subset \Znp$ be a set of size at least $\gamma pn$. 
For a given $i\in\Z/n\Z$ let $\deg(i)$ denote the number of $(k-1)$-APs in $S$ which form a $k$-AP with $i$. Note
that $i\in F(S)$ if $\deg(i)\neq 0$.
Then a.a.s\ we have 
\[\sum_{i\in\Z/n\Z}\deg(i)\geq \xi p^{k-1}n^2\]
which holds trivially 
for the case $k=3$ due to  $\xi\leq \gamma^2/4$ and which is a consequence of 
Theorem~\ref{thm:mathias} for larger $k$.

Further, let $W=\sum_{i\in\Z/n\Z}\binom{\deg(i)}2$. Then, as $S\subset \Znp$
\[\ex{W}\leq n(p^{2(k-1)}n^2+ k^2p^{k-1-\lfloor\frac{k-1}2\rfloor} n)\leq 2p^{2(k-1)}n^3\] and 
its variance is of order  at most $p^{2(k-1)}n^3$.
Hence, by Chebyshev's inequality we have $\sum_{i\in\Z/n\Z}\binom{\deg(i)}2\leq 4p^{2(k-1)}n^3$ asymptotically almost surely.

Altogether we obtain
\[\binom{\xi p^{k-1}n^2}2\leq \binom{\sum_{i\in F(S)}\deg(i)}2\leq |F(S)|\sum_{i\in F(S)}\binom{\deg(i)}2\leq |F(S)|4p^{2(k-1)}n^3\]
and we conclude that $|F(S)|\geq \xi^2n/20=\delta n$.
\end{proof}
\subsection*{Acknowledgement}
We thank the anonymous referee for her or his constructive and helpful remarks.

\begin{bibdiv}
\begin{biblist}

\bib{BMS12}{article}{
   author={Balogh, J{\'o}zsef},
   author={Morris, Robert},
   author={Samotij, Wojciech},
   title={Independent sets in hypergraphs},
   journal={J. Amer. Math. Soc.},
   volume={28},
   date={2015},
   number={3},
   pages={669--709},
   issn={0894-0347},
   review={\MR{3327533}},
   doi={10.1090/S0894-0347-2014-00816-X},
}

\bib{CG11}{article}{
   author={Conlon, D.},
   author={Gowers, W. T.},
   title={Combinatorial theorems in sparse random sets},
   note={Submitted}, 
   eprint={1011.4310}
}

\bib{CGSS14}{article}{
   author={Conlon, D.},
   author={Gowers, W. T.},
   author={Samotij, W.},
   author={Schacht, M.},
   title={On the K\L R conjecture in random graphs},
   journal={Israel J. Math.},
   volume={203},
   date={2014},
   number={1},
   pages={535--580},
   issn={0021-2172},
   review={\MR{3273450}},
   doi={10.1007/s11856-014-1120-1},
}

\bib{Fri99}{article}{
   author={Friedgut, Ehud},
   title={Sharp thresholds of graph properties, and the $k$-sat problem},
   note={With an appendix by Jean Bourgain},
   journal={J. Amer. Math. Soc.},
   volume={12},
   date={1999},
   number={4},
   pages={1017--1054},
   issn={0894-0347},
   review={\MR{1678031 (2000a:05183)}},
   doi={10.1090/S0894-0347-99-00305-7},
}

\bib{Fri05}{article}{
   author={Friedgut, Ehud},
   title={Hunting for sharp thresholds},
   journal={Random Structures Algorithms},
   volume={26},
   date={2005},
   number={1-2},
   pages={37--51},
   issn={1042-9832},
   review={\MR{2116574 (2005k:05216)}},
   doi={10.1002/rsa.20042},
}

\bib{FK00}{article}{
   author={Friedgut, Ehud},
   author={Krivelevich, Michael},
   title={Sharp thresholds for certain Ramsey properties of random graphs},
   journal={Random Structures Algorithms},
   volume={17},
   date={2000},
   number={1},
   pages={1--19},
   issn={1042-9832},
   review={\MR{1768845 (2001i:05136)}},
   doi={10.1002/1098-2418(200008)17:1<1::AID-RSA1>3.0.CO;2-4},
}

\bib{FrRoRuTe06}{article}{
   author={Friedgut, Ehud},
   author={R{\"o}dl, Vojt{\v{e}}ch},
   author={Ruci{\'n}ski, Andrzej},
   author={Tetali, Prasad},
   title={A sharp threshold for random graphs with a monochromatic triangle
   in every edge coloring},
   journal={Mem. Amer. Math. Soc.},
   volume={179},
   date={2006},
   number={845},
   pages={vi+66},
   issn={0065-9266},
   review={\MR{2183532 (2006h:05208)}},
   doi={10.1090/memo/0845},
}

\bib{FRS10}{article}{
   author={Friedgut, Ehud},
   author={R{\"o}dl, Vojt{\v{e}}ch},
   author={Schacht, Mathias},
   title={Ramsey properties of random discrete structures},
   journal={Random Structures Algorithms},
   volume={37},
   date={2010},
   number={4},
   pages={407--436},
   issn={1042-9832},
   review={\MR{2760356 (2012a:05274)}},
   doi={10.1002/rsa.20352},
}

\bib{RR95}{article}{
   author={R{\"o}dl, Vojt{\v{e}}ch},
   author={Ruci{\'n}ski, Andrzej},
   title={Threshold functions for Ramsey properties},
   journal={J. Amer. Math. Soc.},
   volume={8},
   date={1995},
   number={4},
   pages={917--942},
   issn={0894-0347},
   review={\MR{1276825 (96h:05141)}},
   doi={10.2307/2152833},
}

\bib{RR97}{article}{
   author={R{\"o}dl, Vojt{\v{e}}ch},
   author={Ruci{\'n}ski, Andrzej},
   title={Rado partition theorem for random subsets of integers},
   journal={Proc. London Math. Soc. (3)},
   volume={74},
   date={1997},
   number={3},
   pages={481--502},
   issn={0024-6115},
   review={\MR{1434440 (98d:05147)}},
   doi={10.1112/S0024611597000178},
}

\bib{ST12}{article}{
   author={Saxton, David},
   author={Thomason, Andrew},
   title={Hypergraph containers},
   journal={Invent. Math.},
   volume={201},
   date={2015},
   number={3},
   pages={925--992},
   issn={0020-9910},
   review={\MR{3385638}},
   doi={10.1007/s00222-014-0562-8},
}

\bib{mathias}{article}{
   author={Schacht, Mathias},
   title={Extremal results for random discrete structures},
   note={Submitted},
}

\bib{SchSch}{article}{
   author={Schacht, Mathias},
   author={Schulenburg, Fabian},
   title={Sharp thresholds for {R}amsey properties of strictly balanced 
   	nearly bipartite graphs},
   note={Submitted},
   eprint={1602.02501},
}

\bib{Var59}{article}{
   author={Varnavides, P.},
   title={On certain sets of positive density},
   journal={J. London Math. Soc.},
   volume={34},
   date={1959},
   pages={358--360},
   issn={0024-6107},
   review={\MR{0106865 (21 \#5595)}},
}

\bib{vdW}{article}{
    author={{van der Waerden}, B. L.},
    title={{Beweis einer Baudetschen Vermutung}},
    journal={Nieuw Arch. Wiskd., II. Ser.},
    issn={0028-9825},
    volume={15},
    pages={212--216},
    year={1927},
    language={German},
}

\end{biblist}
\end{bibdiv}
\begin{dajauthors}
\begin{authorinfo}[EF]
	Ehud Friedgut\\
	Faculty of Mathematics and Computer Science\\
	Weizmann Institute of Science\\
	Rehovot, Israel\\
	ehud\imagedot{}friedgut\imageat{}weizmann\imagedot{}ac\imagedot{}il
\end{authorinfo}

\begin{authorinfo}[Hiep]
	Hi\d{\^e}p H\`an\\
	Instituto de Matem\'aticas\\ 
	Pontificia Universidad Cat\'olica de Valpara\'\i{}so\\ 
	Valpara\'\i{}so, Chile\\
	han\imagedot{}hiep\imageat{}googlemail\imagedot{}com
\end{authorinfo}

\begin{authorinfo}[YP]
	Yury Person\\
	Institut f\"ur Mathematik\\ 
	Goethe-Universit\"at\\
	Frankfurt am Main, Germany\\
	person\imageat{}math\imagedot{}uni-frankfurt\imagedot{}de\\
	\url{http://www.math.uni-frankfurt.de/~person/}
\end{authorinfo}

\begin{authorinfo}[MS]
	Mathias Schacht\\
	Fachbereich Mathematik\\
	Universit\"at Hamburg\\
	Hamburg, Germany\\
	schacht\imageat{}math\imagedot{}uni-hamburg\imagedot{}de\\
	\url{http://www.math.uni-hamburg.de/home/schacht/}
\end{authorinfo}
\end{dajauthors}

\end{document}